\newtheorem{assumption}{Assumption}
\newtheorem{remark}{Remark}
\def\equationautorefname~#1\null{ (#1)\null}
\newcommand{\numsamples}{K}
\newcommand{\td}{\tilde}
\newcommand{\mbb}{\mathbb}
\newcommand{\mcal}{\mathcal}
\newcommand{\opn}{\operatorname}
\newcommand{\hsp}{\hspace{0.1cm}}
\newcommand{\hspB}{\hspace{0.3cm}}
\newcommand{\pd}{\partial}
\newcommand{\lan}{\left\langle}
\newcommand{\ran}{\right\rangle}
\newcommand{\dimLarge}{M}
\newcommand{\dimSystem}{Q}
\newcommand{\tstar}{t}
\newcommand{\calibMani}[2]{\mcal M_{\opn{calib}}(#2)}
\newcommand{\calibManiApprx}[2]{\mcal M_{\opn{calib},M}(#2)}
\newcommand{\tref}{t_{\opn{ref}}}
\newcommand{\tdiscrete}{\{t_l\}_{l=1,\dots,K}}
\newcommand{\trefGen}[1]{t_{\opn{ref(#1)}}}
\newcommand{\refGen}[1]{\opn{ref}(#1)}
\newcommand{\xMax}{x_{\opn{max}}}
\newcommand{\xMin}{x_{\opn{min}}}
\newcommand{\snapMat}{\mcal S}
\newcommand{\snapMatCalib}{\mcal S_{\opn{calib}}}
\newcommand{\snapMatCalibSub}[1]{\mcal S_{\opn{calib},#1}}
\newcommand{\featureFV}[1]{z_{M,#1}}
\newcommand{\sol}[1]{u(\cdot,#1)}
\newcommand{\solCali}[1]{g}
\newcommand{\solFV}[1]{u_M(\cdot,#1)}
\newcommand{\vecFV}[1]{U_M(#1)}
\newcommand{\vecFVCalib}[1]{U_{\opn{calib,M}}(#1)}
\newcommand{\spTransf}[2]{\varphi(\cdot,#2)}
\newcommand{\spTransfGrid}[2]{\varphi_M(\cdot,#2)}
\newcommand{\tspace}{D}
\newcommand{\hcoeff}{\omega}
\tikzset{decorate sep/.style 2 args=
{decorate,decoration={shape backgrounds,shape=circle,shape size=#1,shape sep=#2}}}
\title{Data-Driven Snapshot Calibration via Monotonic Feature Matching\thanks{Submitted to the editors xxxx\funding{N.S and P.B are supported by the German Federal Ministry for Economic Affairs and Energy (BMWi) in the joint project "MathEnergy - Mathematical Key Technologies for Evolving Energy Grids", sub-project: Model Order Reduction (Grant number: 0324019B). J.G is supported by DFG grant SFB TRR 154, project C05.}}}
\author{Neeraj Sarna\thanks{Corresponding author, Max Planck Institute for Dynamics of Complex Technical Systems, Sandtorstr 1, 39106, Magdeburg, Germany, \email{sarna@mpi-magdeburg.mpg.de}.}
\and Jan Giesselmann\thanks{Department of Mathematics, Technical University of Darmstadt, Darmstadt, 64293, Germany, \email{giesselmann@mathematik.tu-darmstadt.de.}}
\and Peter Benner\thanks{Max Planck Institute for Dynamics of Complex Technical Systems, Sandtorstr 1, 39106, Magdeburg, Germany, and 
Faculty of Mathematics, Otto Von Guericke University Magdeburg, Gustav-Adolf-Str., 39106, Magdeburg 
\email{benner@mpi-magdeburg.mpg.de}.}}
\date{%
}
\begin{document}
\maketitle
\begin{abstract}
Snapshot matrices of hyperbolic equations have a slow singular value decay, resulting in  inefficient reduced-order models. We develop on the idea of inducing a faster singular value decay by computing snapshots on a transformed spatial domain, or the so-called snapshot calibration/transformation. We are particularly interested in problems involving shock collision, shock rarefaction-fan collision, shock formation, etc. For such problems, we propose a realizable algorithm to compute the spatial transform using monotonic feature matching. We consider discontinuities and kinks as features, and by carefully partitioning the parameter domain, we ensure that the spatial transform has properties that are desirable both from a theoretical and an implementation standpoint. We use these properties to prove that our method results in a fast $m$-width decay of a so-called calibrated manifold. A crucial observation we make is that due to calibration, the $m$-width does not only depend on $m$ but also on the accuracy of the full order model, which is in contrast to elliptic and parabolic problems that do not need calibration. The method we propose only requires the solution snapshots and not the underlying partial differential equation (PDE) and is therefore, data-driven. We perform several numerical experiments to demonstrate the effectiveness of our method.

\end{abstract}
\section{Introduction}
Several problems of practical interest are modeled using parameterized PDEs of the form 
\begin{gather}
\mcal L u(x,\mu) = 0\hspB \forall (x,\mu)\in\Omega\times\tspace. \label{evolve prim}
\end{gather}
Here, $\mcal L$ is some differential operator, $\mu\in\tspace$ is some parameter which can encode, for example, different material properties, and $x\in\Omega\subset \mbb R^d$ is a space point. We refer to the book \cite{RBBook} for an elaborate discussion on different parameterized PDEs. Note that $\tspace$ can contain time and in the model problem that we consider later, it is indeed the time domain. Nevertheless, the present discussion applies to general parameter domains. Often, an exact solution to the above problem is unavailable and one seeks an approximation $u(\cdot,\mu)\approx u_\dimLarge(\cdot,\mu)$ in a finite-dimensional space $X_M$ spanned by some basis $\{\phi_i\}_{i=1,\dots,M}$. The approximation $\solFV{\mu}$ is what we refer to as the full-order model (FOM). We assume that $X_M\subset L^2(\Omega)$. 

In a multi-query setting, where a solution is required at several different parameter instances, computing a FOM is computationally expensive and infeasible. Reduced-order models (ROMs) aim to reduce this cost by splitting the solution algorithm into an online-offline phase. A broad description of these two phases is as follows---see \cite{PeterReview} for further details. First, in the offline phase, one computes a snapshot matrix $\snapMat\in\mbb R^{M\times K}$ given as
 $$\snapMat :=\left(\vecFV{\mu_1},\dots,\vecFV{\mu_\numsamples}\right),$$
where $\vecFV{\mu}\in\mbb R^\dimLarge$ is a vector containing all the degrees of freedom of $\solFV{\mu}$ i.e., $\left(\vecFV{\mu}\right)_j := \lan \phi_j,\solFV{\mu}\ran_{L^2(\Omega)}$ where $\{\phi_j\}_j$ is a set of basis functions for $X_M$. The parameters $\{\mu_i\}_{i=1,\dots,K}$ can be chosen uniformly, randomly, or using a greedy procedure based on an a-posteriori error indicator \cite{Veroy,MarioOpDIEM,DoubleGreedy,SamplingLattice}. 

In the online phase, one approximates $\vecFV{\mu}$ in the span of the first $m$ left singular vectors of $\snapMat$, or the so-called Proper-Orthogonal-Decomposition (POD) modes of $\snapMat$. We collect these vectors in the matrix $\mcal U_{m}(\snapMat)$ and with $U^{\opn{red}}_m(\mu)$ we represent an approximation to $\vecFV{\mu}$ in $\opn{range}(\mcal U_{m}(\snapMat))$. The online phase is efficient only if any given error tolerance of practical interest 
\begin{gather}
 \|U^{\opn{red}}_m(\mu)-\vecFV{\mu}\|_2\leq \texttt{TOL}, \label{error ROM}
 \end{gather}
 can be achieved  with a sufficiently small value (preferably $\ll M$) of $m$.

At least empirically, the singular value decay rate of the snapshot matrix is a good indicator of the decay rate of the error in \eqref{error ROM}; see \cite{RBBook,sPOD,RowleyPODFluids}. Let $\sigma_i(\snapMat)$ denote the $i$-th singular value of $\snapMat$. Then, for all $i\in\{1,\dots,\numsamples\}$, we find 
\begin{gather}
 \|\vecFV{\mu_i}-\Pi_{\opn{range}(\mcal U_m(\snapMat))}\vecFV{\mu_i}\|_2\leq   \|\snapMat-\mcal U_m(\snapMat)\mcal U_m(\snapMat)^T\snapMat\|_F = \underbrace{\sqrt{\sum_{i=m+1}^K\sigma_i(\snapMat)^2}}_{=:\Xi_{m}(\snapMat)}.  \label{error POD}
\end{gather}
Above, $\|\cdot\|_F$ represents the Frobenius norm, $\Pi_{\square}$ represents an orthogonal projection operator with $\square$ being a place holder for some finite-dimensional space, and $(\cdot)^T$ represent the transpose of a matrix. If $\{\mu_i\}_{i=1,\dots,K}$ is sufficiently dense in $\tspace$ then, with the above relation, we expect the error in \eqref{error ROM} to decay at a similar rate as $\Xi_{m}(\snapMat)$. 

For hyperbolic problems, there is ample numerical evidence (also provided by the current article) supporting that $\Xi_{m}(\snapMat)$ decays slowly resulting in an inefficient ROM \cite{Cagniart2019,sPOD,nonino2019overcoming,Benjamin2018model}. Therefore, the first step toward developing an efficient ROM is to induce a faster singular value decay in the snapshot matrix, or to so-called calibrate the snapshot matrix. Following the works in \cite{Cagniart2019,Welper2017,sPOD}, we perform calibration by computing snapshots on a transformed domain. This results in a calibrated snapshot matrix that reads
\begin{equation}
\begin{gathered}
\snapMatCalib := \left(\vecFVCalib{\mu_1},\dots,\vecFVCalib{\mu_\numsamples}\right),\\
\text{where}\hspB \left(\vecFVCalib{\mu}\right)_j:=\lan \phi_i, u_M(\spTransfGrid{.}{\mu},\mu)\ran_{L^2(\Omega)}. \label{transformed snapshot}
\end{gathered}
\end{equation}
Above, $\varphi_M(\cdot,\mu):\Omega\to\Omega$ is a spatial transform that satisfies
\begin{equation}
\begin{aligned}
&\text{(P1)} \hsp \spTransfGrid{t}{\mu} \text{ is a homeomorphism},\\
&\text{(P2)} \hsp \|D_x\spTransfGrid{t}{\mu}^{-1}\|_{L^\infty(\Omega)},  \|D_x\spTransfGrid{t}{\mu}\|_{L^\infty(\Omega)}\leq \mcal K_1,\label{prop phi}
\end{aligned}
\end{equation}
where, $\mcal K_1 > 1$ is a user-defined constant and $D_{\square}$ denotes a weak-derivative with $\square$ being a place holder for a variable. We can think of $\varphi_M$ as a way of artificially introducing the desired regularity in the snapshots along the parameter domain, which eventually results in a fast singular value decay. For further clarification, we refer to the numerous examples and arguments in \cite{Cagniart2019,Welper2017,RimTR} and to the later sections of our work. The properties (P1) and (P2) are desirable from both a theoretical and a numerical implementation standpoint. They will be particularly helpful in studying the $m$-width of a so-called calibrated manifold defined below. Later sections provide further elaboration.

Note that snapshot calibration is an offline step. In the online phase, we can use the POD modes of $\snapMatCalib$ to approximate $\vecFVCalib{\mu}$ and then recover an approximation to $\vecFV{\mu}$ using $\varphi_M(\cdot,\mu)^{-1}$, or its approximation.  
Development of a PDE-based online algorithm that is stable, efficient and competitive with  finite-element/volume/difference type approximations is another challenging task and we plan to tackle it in the future---preliminary, but noteworthy, work in this direction can be found in \cite{Cagniart2019,SarnaGrundel2020,MATS,Nair}.

We propose a data-driven and feature-matching-based algorithm to compute $\varphi_M$ that satisfies (P1) and (P2). Let us elaborate on what we mean by feature matching. A feature is either a discontinuity or a kink (defined precisely later) in a snapshot $\solFV{\mu}$, and with $z_M(\mu)$ we represent its spatial location. We want the feature locations in $u_M(\spTransfGrid{.}{\mu_i},\mu_i)$ to coincide with those in some reference snapshot $\solFV{\mu_{\opn{ref}}}$ i.e.,
\begin{gather}
\varphi_M(z_M(\mu_{\opn{ref}}),\mu_i) = z_M(\mu_i),\hspB\forall i\in \{1,\dots,\numsamples\}. \label{match feature}
\end{gather}
We extend $\varphi_M(\cdot,\mu_i)$ to $\Omega$ by piecewise linear interpolation. We allow for multiple-features, feature interaction and feature formation. In order to deal with these cases, we propose an adaptive selection of the reference snapshot $\solFV{\mu_{\opn{ref}}}$ such that (P1) and (P2) are satisfied. In \Cref{sec: feature match} we discuss feature matching in further detail. Note that due to its data-driven nature, our algorithm treats all discontinuities the same i.e., it does not differentiate between shocks and contact discontinuities.

Most of the previous model-order reduction methods for hyperbolic equations were restricted to either periodic or extrapolated boundary conditions---for instance, see \cite{sPOD,RimTR,RimOT,Sapsis2018,FrozenROM}. The reason being that these works relied on either a (or multiple) spatial shift, a Lie group action, or an optimal transport map, all of which have some restrictions on the boundary conditions. We show that general time-dependent boundary conditions are naturally included in the feature matching framework by defining the boundary points as additional features. The numerical experiments included in \Cref{sec: num exp} showcase that our method works well for time-dependent boundary conditions.

In an abstract sense, an approximation of $\vecFVCalib{\mu}$ in the POD modes of $\snapMatCalib$ is a linear approximation of the so-called calibrated snapshot manifold defined as
\begin{gather}
\mcal M_{\opn{calib},M}(\tspace) := \{\Pi_{X_M}u_M(\varphi_M(\cdot,\mu),\mu)\hsp :\hsp \mu\in\tspace\}.\label{calib mani apprx}
\end{gather}
A linear approximation can be accurate only if the $m$-width of $\calibManiApprx{}{\tspace}$ decays fast. We prove that this is indeed the case for the calibrated manifold resulting from feature matching.
We provide a bound for the $m$-width of  $\mcal M_{\opn{calib},M}(\tspace)$ in case the FOM is a finite volume (FV) scheme. Our bound depends explicitly on both $m$ and $M$. To the best of our knowledge, no earlier works provide such a bound, making our work the first of its kind that provides a theoretical justification for feature matching. Note that, compared to the definition of the calibrated manifold proposed in \cite{Cagniart2019}, our definition is closer to what is actually used in practice---our definition uses the FOM whereas the one in \cite{Cagniart2019} uses the exact solution of the evolution equation \eqref{evolve prim}. The bounds on the $m$-width are discussed in detail in \Cref{sec: width decay}. 

We propose to match both kinks and discontinuities.
Usually, one would only match discontinuities---see for instance \cite{Cagniart2019,Welper2017}. This could be because (i) kinks get smeared out due to numerical dissipation and go undetected, or (ii) because, despite the kinks being detectable, they are not included in the set of features. For the first case, we show that, due to smearing, the FOM has sufficient regularity to ensure a fast $m$-width decay. For the second case, we show that matching both kinks and discontinuities provides a better calibration than only discontinuity matching. Precisely, in \Cref{sec: width decay}, we prove that both kink and discontinuity matching results in a calibrated manifold with an $m$-width that is $\mcal O(m^{-2})$, which is $\mcal O(m^{-1})$ times better than what only discontinuity matching offers.  To summarize, we establish that if kinks are detectable, then it is advantageous to include them in the feature set.

In \Cref{sec: num exp}, we perform several numerical experiments showcasing the effectiveness of our method. Mindful of the above discussion, we consider highly accurate approximations in $X_M$ where both kinks and discontinuities can be identified. For this reason, we consider the best-approximation in $X_M$ and show that kink and discontinuity matching results in a fast singular-value decay and that both kink and discontinuity matching is better than only discontinuity matching.

Our method is explicit in the sense that we explicitly compute the feature locations and match them. In the context of model-order reduction, explicit methods have been used before (see \cite{RBHyp,Constantine}), but never for problems involving multiple-features and feature interaction. Rather than using an explicit method, one can also solve an optimization problem and expect the features to be matched implicitly \cite{Welper2017,Nair}. The following reasons motivated our choice of an explicit method. Firstly, the optimization problem in implicit methods is (usually) non-convex and non-linear. If the samples $\{\mu_i\}$ are not chosen carefully, then the minimization problem can get stuck in sub-optimal local minima, resulting in a $\snapMatCalib$ with a slow singular value decay. Secondly, explicit methods rely on shock tracking/identifying techniques that are well-studied for hyperbolic problems \cite{DGBook}. Thirdly, in explicit methods, it is easier to quantify (at least empirically) the error in identifying the true feature location, which is helpful in quantifying the $m$-width decay rate. Lastly, with an access to feature locations, it easier to satisfy (P1) and (P2), which otherwise have to be included as constraints in the optimization problem. To the best of our knowledge, none of the implicit methods can impose such constraints.

We mention that apart from snapshot calibration, in the context of hyperbolic equations, other strategies to construct an accurate approximation space include online adaptivity of basis \cite{Benjamin2018model,LaxPairs}, embedding of the solution manifold in the Wasserstein metric space \cite{Metric2019} and the use of auto-encoders \cite{KevinAuto}. Comparison of the approximation space resulting from snapshot calibration to these other works is an interesting question in its own right and we plan to tackle it in the future.

\section{Feature Matching}\label{sec: feature match}
As a model problem, we interpret time as a parameter and consider the time-dependent hyperbolic conservation law in one space dimension given by
\begin{equation}
\begin{gathered}
\pd_t u(x,t) + \pd_x f(u(x,t)) = 0,\hspB\forall (x,t)\in \Omega\times \tspace,\hspB u(x,t=0) = u_0(x)\hspB\forall x\in \Omega,\\
u(x,t) = \mcal G(x,t),\hspB \forall (x,t)\in\pd\Omega\times\tspace.
\label{evolution eq}
\end{gathered}
\end{equation}
Above, $\tspace := [0,T]$ is the time-domain with some final time $T>0$, $u_0$ is the initial data and $\mcal G$ is some (given) boundary data. We interpret the boundary conditions in a weak-sense as described in \cite{BCHyp}. The solution vector $u$ maps $\Omega\times\tspace$ to $\mbb R^\dimSystem$ and $f:\mbb R^\dimSystem\to\mbb R^\dimSystem$ is a so-called flux function, where we allow $\dimSystem\geq 1$.
We restrict to a one-dimensional spatial domain with $\Omega:=(\xMin,\xMax)\subset\mbb R$. We consider a FV approximation space $X_M$ where
we partition $\Omega$ into $M$ sub-intervals of the same size $\Delta x = (\xMax-\xMin)/M$ i.e.,
\begin{gather}
    \Omega = \bigcup_{i=1}^M \mcal I_i,\hspB |\mcal I_i| = \Delta x. \label{partition space}
\end{gather}
For notational simplicity, we consider a uniform spatial grid---an extension to non-uniform grids is straightforward.

For notational simplicity, we restrict our discussion to scalar problems i.e., $\dimSystem=1$ in \eqref{evolution eq}. An extension to systems follows by applying the proposed method to every component of the solution vector. 
We find $\varphi_M$ such that the feature locations in $u_M(\spTransfGrid{t}{\tstar_k},\tstar_k)$ match to those in some reference snapshot $\solFV{t_{\opn{ref}}}$. 
The methodology used to compute $\varphi_\dimLarge$ drives the choice for $\solFV{\tref}$. For the present discussion, we choose 
\begin{gather}
t_{\opn{ref}} = 0. \label{ref time}
\end{gather} 
The motivation behind our choice becomes clear as we proceed. First, we define the notion of a feature. Note that the definition implicitly assumes that the exact solution has a finite number of features, a reasonable assumption for most problems of practical interest.

\begin{definition}[Feature] \label{def: feature}
A feature is either a discontinuity or a kink in the solution. For any $t\in \tspace$, let there be $p(t)\in\mbb N$ of such features. With $z_i(t)$ we represent the $i$-th feature location in $\sol{t}$. Furthermore, with $\featureFV{i}(t)$ we denote an approximation to $z_i(t)$ computed using $\solFV{t}$. Assuming that between the locations of discontinuities $\sol{t}$ has a weak derivative, we define a kink location as a space point where this weak derivative is discontinuous. Furthermore, we define the boundary points of $\Omega$ as two additional feature locations i.e., 
\begin{gather}
z_0(t) = z_{M,0}(t) =  \xMin,\hspB z_{p(t)+1}(t) =z_{M,p(t)+1}(t)=\xMax. 
\end{gather}
 Without loss of generality, we assume the ordering 
 $$\featureFV{0}(t) < \featureFV{1}(t)<\dots < \featureFV{p(t)+1}(t). $$ 
\end{definition}

We want to match the same type of features i.e., kinks with kinks and discontinuities with discontinuities. To distinguish between these two types of features, we associate an identifier with a feature location and define it in the following.

\begin{definition}[Identifier] \label{def gamma}
The identifier $\Gamma:\Omega\to\{0,1\}$ acts on a feature location and returns zero or one depending on whether there is a discontinuity or a kink at that location, respectively. For convenience, we collect all the identifiers in a vector $\gamma_M(\tstar_k)\in\mbb R^{p(t)}$ defined as $\left(\gamma_M(\tstar_k)\right)_i = \Gamma(\featureFV{i}(\tstar_k)).$
\end{definition}

We ask the following question. For some $t\in\tdiscrete$, given a snapshot $\solFV{t}$ and a reference snapshot $\solFV{\tref}$, does there exist a $\varphi_M$ that satisfies (P1) and (P2) and, in the sense of \eqref{match feature}, matches the features between $u_M(\spTransfGrid{t}{\tstar},\tstar)$ and $\solFV{\tref}$? We show that the answer to this question is yes if the following three conditions are satisfied
\begin{equation}
\begin{gathered}
\text{(C1)}\hspB p(t) = p(\tref),\hspB \text{(C2)}\hspB \gamma_M(t) = \gamma_M(\tref),\\
\text{(C3)}\hspB \frac1{\mcal K_1}\leq  \frac{|\featureFV{i+1}(\tref) - \featureFV{i}(\tref)|}{|\featureFV{i+1}(t) - \featureFV{i}(t)|}\leq \mcal K_1  \quad \forall  {i\in \{0,\dots,p(t)\}}. \label{assume naive}
\end{gathered}
\end{equation}
Above, $\mcal K_1$ is the same as that defined in \eqref{prop phi}.
The conditions (C1) and (C2) imply that the two snapshots have the same number and the same types of features. Furthermore, relative to $\solFV{\tref}$, (C3) prevents the features in $\solFV{t}$ from either coming too close or from moving very far away from each other. One can interpret the conditions (C1)-(C3) as a way of measuring the similarity of a snapshot to the reference snapshot, and if similar, we can find a $\varphi_M$ between the two snapshots that satisfies (P1) and (P2). If (C1)-(C3) is satisfied, then we say that $\solFV{t}$ matches to $\solFV{\tref}$ and for convenience, represent the matching by the notation
\begin{align}
\text{(C1), (C2) and (C3)}\hspB\Leftrightarrow\hspB \solFV{t}\leftrightarrow\solFV{\tref}. \label{equiv relation}
\end{align}

\subsection{Construction of $\varphi_M$}
Assume that $\solFV{t}\leftrightarrow\solFV{\tref}$ then feature matching provides 
$$\varphi_M(\featureFV{i}(\tref),\tstar) = \featureFV{i}(t),\hspB\forall i\in \{0,\dots,p(t)+1\}.$$
Note that (C2) ensures that the above relation does not match discontinuities to kinks or vice-versa. Furthermore, including the endpoints of $\Omega$ as features implies that $\varphi_M(\pd\Omega,t) = \pd\Omega$.
To extend $\varphi_M(\cdot,t)$ to $\Omega$, we perform a piecewise linear
interpolation, which, for $i \in \{0,\dots,p(t)\}$ and $x\in [z_{M,i}(\tref),z_{M,i+1}(\tref)]$, provides
\begin{equation}
\begin{aligned}
\varphi_M(x,\tstar) = &\left(\frac{x-\featureFV{i}(\tref)}{\featureFV{i + 1}(\tref)-\featureFV{i}(\tref)}\right)\featureFV{i + 1}(\tstar)\\
& + \left(\frac{x-\featureFV{i + 1}(\tref)}{\featureFV{i}(\tref)-\featureFV{i + 1}(\tref)}\right)\featureFV{i}(\tstar). \label{def spTransf}
\end{aligned}
\end{equation}

Trivially, $\spTransfGrid{.}{t}$ is continuous upto the boundary with $\varphi_M(\pd\Omega,t) = \pd\Omega$, which, due the ordering of the features in \Cref{def: feature}, implies that $\spTransfGrid{}{t}$ is strictly increasing. Thus, $\spTransfGrid{.}{t}$ is a homeomorphism.  Furthermore, the following relation and (C3) provides (P2). For all $t\in\tdiscrete$ and ${i\in \{0,\dots,p(t)\}}$, we find
\begin{equation}
\begin{aligned}
\frac{1}{\mcal K_1} \leq \left.D_x\varphi_M(\cdot,t)\right\rvert_{(\featureFV{i}(\tref),\featureFV{i+1}(\tref) )} = &\frac{|\featureFV{i+1}(\tref) - \featureFV{i}(\tref)|}{|\featureFV{i+1}(t) - \featureFV{i}(t)|}\\
\leq &\mcal K_1 .\label{der spTransf}
\end{aligned}
\end{equation}

We elaborate on why it is desirable to have (P1) and (P2).
\begin{enumerate}
\item \textit{Onto property:} as mentioned in the introduction, eventually in an online phase we want to approximate the calibrated snapshot $\vecFVCalib{t}$ in span of the POD modes of $\snapMatCalib$. We expect such an approximation to be accurate if $\spTransfGrid{}{t}$ is an onto function. We also refer to the arguments made in \cite{Welper2017} and our analysis in \Cref{sec: width decay} indicating that the onto property is desirable. At least intuitively, the following example further elaborates on the desirability of the onto property. Suppose that the characteristics curves originating from $t=0$ pass through every point in $\Omega$ for some $t^*\in \tspace$. Then a $\varphi_M(\cdot,t^*)$ that is not onto, will discard some information in $\solFV{t^*}$, which is undesirable and inconsistent with the characteristics.
\item \textit{Invertibility:} the analysis in \Cref{sec: width decay} indicates that the invertibility of $\varphi_M(\cdot,t)$ is desirable.
\item \textit{Continuity and monotonicity:} continuity and monotonicity of $\spTransfGrid{t}{\tstar}$ ensure that, as compared to $\solFV{\tstar}$, no new discontinuities appear in $u_M(\spTransfGrid{t}{\tstar},\tstar)$.
For the same reason, $\spTransfGrid{t}{\tstar}^{-1}$ should also be continuous. Points (1)-(3) imply that $\spTransfGrid{t}{\tstar}$ should be a homeomorphism i.e., it should satisfy (P1).
\item \textit{Bounds on the derivatives:} the bound on the $m$-width, which we present later in \Cref{sec: width decay}, scales with $\|D_x\varphi_M(\cdot,\tstar)\|_{L^{\infty}(\Omega)}$ and $\|D_x\varphi_M(\cdot,\tstar)^{-1}\|_{L^{\infty}(\Omega)}$, which motivates (P2).
\end{enumerate}

\subsection{Open questions}
The above formulation leaves the following questions open. The rest of the article (tries) to answer them. 
\begin{itemize}
\item How to handle the cases where (C1)-(C3) are not satisfied?
\item How to determine the feature locations in practise?
\item Why does feature matching result in a fast singular value decay?
\end{itemize}

In relation to the first question, it is easy to violate (C1). Consider \Cref{fig: shock collision} that shows the time-trajectory of two discontinuities in an otherwise smooth function. At $t=T_0$, two discontinuities interact to form a single one, changing the value of $p(t)$ from two to one. We handle such cases by partitioning $\tdiscrete$ into subsets and choosing (different) suitable reference snapshots such that (C1)-(C3) is locally satisfied in each of the subsets. The details are discussed in \Cref{sec:ARSS}.

To answer the third question rigorously, we need decay estimates for the singular values of the calibrated snapshot matrix $\snapMatCalib$. Such estimates are unavailable, as yet. However, later (in \Cref{sec: width decay}), we prove that feature matching results in a fast $m$-width decay of the calibrated manifold defined in \eqref{calib mani apprx}. At least empirically, a fast $m$-width decay results in a fast singular value decay of the snapshot matrix. Our expectation is corroborated by the numerous numerical experiments (performed in \Cref{sec: num exp}) where we empirically establish a fast singular value decay in the calibrated snapshot matrix.

\begin{figure}
\centering
\begin{tikzpicture}
\draw[->] (-0.1,0) -- (-0.1,3);
\draw[->] (-0.1,0) -- (3,0);
\draw[black,line width=0.5mm] (0,0) -- (1,1);
\draw[black,line width=0.5mm] (2,0) -- (1,1);
\draw[black,line width=0.5mm] (1,1) -- (1.5,2);
\draw[dashed] (0,1) -- (2,1);
\draw[dashed] (0,2) -- (2,2);
\node[] at (-0.3,1) {$T_0$};
\node[] at (-0.3,2) {$T$};
\node[] at (-0.3,3) {$t$};
\node[] at (3,-0.15) {$x$};
\end{tikzpicture}
\caption{Time trajectory of two discontinuities that merge to form a single discontinuity.}\label{fig: shock collision}
\end{figure}
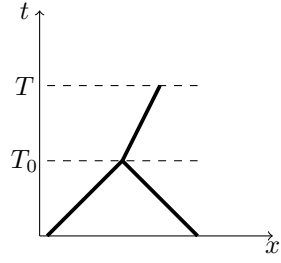
\subsection{Adaptive reference snapshot selection}\label{sec:ARSS} Recall the conditions (C1)-(C3) given in \eqref{assume naive}. A snapshot $\solFV{t_k}$ cannot be matched to $\solFV{\tref}$ if either of these three conditions are violated. To handle such cases, we partition $\tdiscrete$ into subsets containing subsequent time-instances. For each of these subsets, we find a different $\tref$ such that (C1)-(C3) is satisfied locally. The details are as follows. 

We start with introducing the following notation.

\begin{definition}[Time partitions]\label{time partitions}
We partition $\tdiscrete$ into $N \in \mbb N$ subsets (where $N$ will be an outcome of the snapshot selection algorithm). We denote the $i$-th subset by $[t]_i$. With $r(i)\in\mbb N$ we denote the number of elements in $[t]_i$, and with $\trefGen{i}$ we denote the first element of $[t]_i$, where $\refGen{i}$ is an index in $\{1,\dots,K\}$. Under this notation, $[t]_i$ reads 
$$[t]_i := \{\trefGen{i},\dots,t_{\opn{ref(i)} + r(i)-1}\}.$$
\end{definition}

\Cref{algo: part time} presents the reference snapshot selection algorithm. The algorithm starts with the initial data as the reference snapshot, compares it to the subsequent snapshots and, in case matching is not possible, updates the reference snapshot. In addition to checking (C1)-(C3), the algorithm enforces a lower bound on the minimum distance between the features. At least empirically, one observes that the error in computing a feature location (i.e., $|z_j(t) - \featureFV{j}(t)|$) is of the order of the grid-size $\Delta x$. Therefore, to have a reliable calibration we need
\begin{gather}
\mcal K_2\Delta x \leq \min_j|\featureFV{j+1}(t)-\featureFV{j}(t)|\hspB\text{where}\hspB \mcal K_2 \geq 2.  \label{lower bound dist}
\end{gather}

The output of the algorithm are the time-indices $\{\refGen{i}\}_{i=1,\dots,N}$ of the reference snapshots. With these time indices, we construct $[t]_i$ as $[t]_i = \{t_{\refGen{i}},\dots,t_{\refGen{i+1}-1}\}$. Furthermore, with the help of $[t]_i$, we split the snapshot matrix as
\begin{gather}
\snapMat = \left(\snapMat_1,\dots,\snapMat_N\right), \label{sub-matrix}
\end{gather}
where each of the sub-matrices $\snapMat_i\in\mbb R^{\dimLarge\times r(i)}$ contain the snapshots for all $t\in [t]_i$ and can be calibrated using feature matching. 

\begin{remark} We further elaborate on the importance of ensuring the lower-bound in \eqref{lower bound dist}. For proper calibration, the ordering of features observed in the numerical solution should be the same as that for the exact solution. At least empirically, we observe that the feature detection algorithm provides feature locations that are correct up to errors of size $\Delta x$.
Therefore, in our numerical experiments we do not match snapshots containing features that are closer than $2 \Delta x$ to any other snapshots i.e., we satisfy the lower-bound in \eqref{lower bound dist}.
\end{remark}

\subsection{Approximation space}
We discuss how to use the above splitting of the snapshot matrix to construct an approximation space for the calibrated snapshot $\vecFVCalib{t}$ defined in \eqref{transformed snapshot}. We first consider the time interval $D_i$, which is a continuous analogue of $[t]_i$, and is given as 
\begin{gather}
\tspace_i := [\trefGen{i},t_{\opn{ref}(i) + r(i)-1}]. \label{def Di}
\end{gather}
Let $\snapMatCalibSub{i}$ represent a calibration of $\snapMat_i$. In the online phase, for $t\in \tspace_i$, we approximate $\vecFVCalib{t}$ in the span of the first $m_i$ left singular-vectors of $\snapMatCalibSub{i}$ i.e., in $\opn{range}(\mcal U_{m_i}(\snapMatCalibSub{i}))$.

We now consider the time interval $d_i$, which is the gap between $D_i$ and $D_{i+1}$, and reads
\begin{gather}
    d_i := (t_{\refGen{i+1}-1},t_{\refGen{i+1}}). \label{def di}
\end{gather}
Since the snapshots $\solFV{t_{\refGen{i+1}-1}}$ and $\solFV{t_{\refGen{i+1}}}$ do not match, we need information from both $\snapMatCalibSub{i}$ and $\snapMatCalibSub{i+1}$ for an accurate approximation of $\vecFVCalib{t}$. Therefore, we consider the approximation space $\opn{range}(\mcal U_{m_i}(\snapMatCalibSub{i}))+ \opn{range}(\mcal U_{m_{i+1}}(\snapMatCalibSub{i + 1}))$. We summarize our above discussion.
\begin{enumerate}
    \item For $t\in \tspace_i$, approximate $\vecFVCalib{t}$ in $\opn{range}(\mcal U_{m_i}(\snapMatCalibSub{i}))$.
    \item For $t\in d_i$, approximate $\vecFVCalib{t}$ in the sum of $\opn{range}(\mcal U_{m_i}(\snapMatCalibSub{i}))$ and $ \opn{range}(\mcal U_{m_{i+1}}(\snapMatCalibSub{i + 1}))$.
\end{enumerate}

\begin{algorithm}[ht!]
\caption{Reference snapshot selection algorithm}
    \hspace*{\algorithmicindent} \textbf{Input: }$\snapMat$, $\mcal K_1$, $\mcal K_2$\\
    \hspace*{\algorithmicindent} \textbf{Output: }$\{\refGen{i}\}_{i=1,\dots,N}$
\begin{algorithmic}[1] \label{algo: part time}
\STATE Initialize with $N\gets 1$, $\refGen{N}\gets 1$ and $k \gets 1$
\STATE $\Delta_{\min} z(\trefGen{N})\gets \min_j|\featureFV{j + 1}(\trefGen{N})-\featureFV{j}(\trefGen{N})|$
\STATE $\Delta_{\min} z(t_k)\gets \min_j|\featureFV{j+1}(t_k)-\featureFV{j}(t_k)|$
\STATE Check whether the following conditions are satisfied: (C1)-(C3), $\Delta_{\opn{min}} z(\trefGen{N})  > \mcal K_2\Delta x$ and $\Delta_{\opn{min}} z(t_k)  > \mcal K_2\Delta x$.
\STATE If the above statement returns true, increment $k$ by on. Else, increment $N$ by one, change $\refGen{N}$ to $k$ and increase $k$ by one. 
\STATE Till $k\leq K$, repeat from line-2.

\end{algorithmic}
\end{algorithm}
\subsection{Relation to the previous works} To the best of our knowledge, only the works in \cite{WelperAdaptive,sPOD} propose a snapshot calibration technique for problems involving feature interaction and formation. We compare our method to both of these works. The authors in \cite{WelperAdaptive} propose a so-called transformed snapshot interpolation (TSI) to handle shock collision problems and it differs from the current work in the following ways. Firstly, authors use an implicit method (the method requiring a solution to an optimization problem, see the introduction) to find the transform $\varphi_M$. Secondly, authors partition the time-domain using a $hp$-finite element strategy, which does not rely on a reference snapshot selection.  Thirdly, it is unclear whether the transform $\varphi_M$ satisfies the properties (P1) and (P2) both of which, at least according to our analysis, are crucial. 

Our method differs from the shifted-POD approach (proposed in \cite{sPOD}) in the following sense. Firstly, shifted-POD is an iterative algorithm where each iteration calibrates a particular transport mode by shifting the spatial domain. Our spatial transform $\varphi_M$ takes care of all the transport modes in one step, avoiding the need for iterations. Secondly, the shift value computation in shifted-POD requires a significant user-interference and results from either a careful observation of the snapshot matrix $\snapMat$ or of its singular values. In comparison, after the snapshot matrix is computed, our method to compute $\varphi_M$ is automatic. Thirdly, the shifted-POD does not cater to time-dependent boundary conditions. Note that none of the above two works study the $m$-width decay of the calibrated manifold.

\section{Kolmogorov $m$-width decay}\label{sec: width decay}
In this section, we study the $m$-width of the calibrated manifold $\calibManiApprx{t}{\tspace_i}$ defined in \eqref{calib mani apprx}. Here, $D_i$ is the continuous analogue of $[t]_i$ defined in \eqref{def Di}. This section has two main highlights (i) the bound on the $m$-width does not only depend on the ROM dimension $m$ but also on the FOM dimension $M$, and (ii) for  sufficiently regular initial data $u_0$ and flux function $f$, the $m$-width decays fast with respect to $m$. Precisely, when the FOM is a FV solution, we show that 
\begin{gather}
    \delta_m(\calibManiApprx{t}{\tspace_i}) = \mcal O(m^{-\hcoeff}) + \mcal O( M^{-\frac{1}{2}}),\label{higligh decay}
\end{gather}
where the coefficient $\hcoeff$ is related to the regularity of $u_0$ and $f(u_0)$ between the features. Furthermore,
for any manifold $\mcal M:=\{h(\cdot,t)\hsp:\hsp t\in\tspace\} \subset L^2(\Omega)$ its $m$-width, denoted by $\delta_m(\mcal M)$, is defined as
\begin{gather}
\delta_m(\mcal M):= \inf_{\substack{\mcal V_m\subset L^2(\Omega)\\ \opn{dim}(\mcal V_m) = m}}\|h-\Pi_{\mcal V_m}h\|_{L^2(\Omega\times \tspace)}. \label{def width}
\end{gather}
The $M$-dependency of the $m$-width appearing in \eqref{higligh decay} is introduced via the transform $\varphi_M$, which we compute using the FOM. Note that for elliptic and parabolic problems, calibration is not needed resulting in only an $m$-dependent $m$-width \cite{CohenElliptic}. 

We now discuss the details of the result mentioned above. We restrict ourselves to a scalar conservation law i.e., $Q=1$ in \eqref{evolution eq}. Furthermore, we make the standard assumption that the flux function $f$ is at least $C^2$ and is strictly convex. Note that for $t\in d_i$, where $d_i$ is the gap between $D_i$ and $D_{i+1}$ and is as given in \eqref{def di}, calibration using feature matching is not possible and therefore, $\calibManiApprx{t}{d_i}$ is irrelevant. Furthermore, since we use snapshots from both $D_{i}$ and $D_{i+1}$ to approximate the solution inside $d_i$, we expect this approximation to be accurate.

We start with defining a few quantities and making some assumptions.
In the earlier sections, we considered a discrete space-time domain. For a large enough $M$, we expect the feature locations $z_i(t)$ to behave similar to the approximate feature locations $\featureFV{i}(t)$. This motivates the assumption that since for all $t,t^*\in[t]_1$, we have $\solFV{t}\leftrightarrow\solFV{t^*}$, we also have
\begin{assumption}
$
\sol{t}\leftrightarrow \sol{t^*},\hspB\forall t,t^*\in\tspace_i  \text{ for all } i=1,\dots,N$. 
\end{assumption}
Our results are the same for all the different $\tspace_i$. Therefore, we present our results on some representative $D_i$ that we denote by $D$ for brevity.
The above assumption allows us to define the following.
\begin{definition}[Calibrated manifold]
Similar to $\calibManiApprx{}{\tspace}$, define
\begin{gather}
    \calibMani{.}{\tspace}:=\{u(\varphi(\cdot,t),t)\hsp :\hsp t\in \tspace\}. \label{calib manifold}
\end{gather}
Above, $\varphi$ is the same as $\varphi_M$ defined in \eqref{def spTransf} but with $\featureFV{j}(t)$ replaced by the exact feature location $z_j(t)$. We can interpret the functions in $\calibMani{}{\tspace}$ as a continuous-in-space analogue of those in $\calibManiApprx{.}{\tspace}$.
\end{definition}
In the next definition, we partition the space-time domain using the time-trajectory of different feature locations.
\begin{definition}[Space-time partitioning]
Let the number of features in $\solFV{\tref}$ be $p_0$ i.e., $p(\tref=0) = p_0$. For $i\in \{0,\dots,p_0\}$, define
\begin{gather}
\Omega_i := (z_{i}(0),z_{i+1}(0)),\hspB \Omega_i^D := \{(x,t)\hsp :\hsp x\in (z_i(t),z_{i+1}(t)),\hsp t\in\tspace\}. \label{def Omegai}
\end{gather} Note that $\opn{clos}{\left(\Omega\right)} = \bigcup_{i=0}^{p_0}\opn{clos}{\left(\Omega_i\right)}$. 
\end{definition}
The main result of this section and its corollary are summarised below. The rest of the section proves this result. 

\begin{theorem}\label{theorem: decay rate}
The $m$-width of the calibrated manifold $\delta_m(\calibManiApprx{}{\tspace})$ is bounded by
\begin{equation}\label{bound approx mani}
\begin{aligned}
\delta_m(\calibManiApprx{}{\tspace})\leq &\delta_m(\calibMani{}{\tspace})\\
&+ \sup_{t\in\tspace}\|D_x\varphi_M(\cdot,t)^{-1}\|_{L^\infty(\Omega)} \|u_M-u\|_{L^2(\Omega\times D)}\\
&+
\|u_M\circ \varphi_M-\Pi_{X_M}u_M\circ \varphi_M\|_{L^2(\Omega\times D)}\\
&+\sqrt{\|u\|_{L^\infty(\tspace;BV(\Omega))} \|u\|_{L^2(\tspace;BV(\Omega))}}\\
&\times \sqrt{\max_{j}\|\featureFV{j}-z_j\|_{L^{\infty}(\tspace)}}\times \max(1,\|D_x\varphi_M\|_{L^\infty(\Omega\times \tspace)}).
\end{aligned}
\end{equation}
\end{theorem}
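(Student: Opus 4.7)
My plan is to chain three triangle-type bounds relating $m$-widths of parametric manifolds. The underlying observation is that for any two manifolds $\mcal M_j = \{h_j(\cdot, t) : t \in D\}\subset L^2(\Omega)$ indexed by the same parameter,
\[
\delta_m(\mcal M_1) \leq \delta_m(\mcal M_2) + \|h_1 - h_2\|_{L^2(\Omega\times D)},
\]
which follows, for any $m$-dimensional subspace $V\subset L^2(\Omega)$, from $\|h_1 - \Pi_V h_1\|_{L^2} \leq \|h_1 - \Pi_V h_2\|_{L^2} \leq \|h_1 - h_2\|_{L^2} + \|h_2 - \Pi_V h_2\|_{L^2}$, taking the infimum in $V$ and invoking the definition of $\delta_m$. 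I would apply this three times along the chain
\[
\Pi_{X_M}(u_M\circ\varphi_M) \to u_M\circ\varphi_M \to u\circ\varphi_M \to u\circ\varphi,
\]
so that $\delta_m(\calibManiApprx{}{D})$ is bounded by $\delta_m(\calibMani{}{D})$ plus one residual term per arrow, which the plan will identify with the three remaining summands in \eqref{bound approx mani}.

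\textbf{First two arrows.} Replacing $\Pi_{X_M}(u_M\circ\varphi_M)$ by $u_M\circ\varphi_M$ contributes exactly the projection residual, the third summand of \eqref{bound approx mani}. Replacing $u_M\circ\varphi_M$ by $u\circ\varphi_M$ contributes $\|(u_M-u)\circ\varphi_M\|_{L^2(\Omega\times D)}$, which I would handle by the change of variables $y=\varphi_M(x,t)$. By (P1), $\varphi_M(\cdot,t)$ is a homeomorphism of $\Omega$; the Jacobian $D_y\varphi_M^{-1} = 1/D_x\varphi_M$ is controlled in $L^\infty$ by (P2); and the computation yields $\|(u_M-u)\circ\varphi_M\|_{L^2(\Omega)}^2 \leq \|D_x\varphi_M^{-1}\|_{L^\infty(\Omega)}\,\|u_M-u\|_{L^2(\Omega)}^2$. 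Since $\int_\Omega D_x\varphi_M\,dx = |\Omega|$ forces $\min D_x\varphi_M \leq 1$, hence $\|D_x\varphi_M^{-1}\|_{L^\infty}\geq 1$, I may replace the square root of the Jacobian bound by the bound itself, delivering the second summand of \eqref{bound approx mani}.

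\textbf{Third arrow (main obstacle).} The residual is $\|u\circ\varphi_M - u\circ\varphi\|_{L^2(\Omega\times D)}$. By the piecewise-linear construction \eqref{def spTransf}, $\varphi_M$ and $\varphi$ share the same interpolation nodes on $\Omega$ and the same boundary values, differing only in using $\featureFV{j}(t)$ in place of $z_j(t)$, so
\[
\|\varphi_M(\cdot,t) - \varphi(\cdot,t)\|_{L^\infty(\Omega)} \leq \max_j|\featureFV{j}(t) - z_j(t)| =: \eta(t).
\]
The plan is to apply the interpolation inequality $\|g\|_{L^2}^2\leq \|g\|_{L^\infty}\|g\|_{L^1}$ to $g(\cdot,t) = u\circ\varphi_M(\cdot,t) - u\circ\varphi(\cdot,t)$. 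The $L^\infty$ factor is bounded by $2\|u(\cdot,t)\|_{L^\infty(\Omega)}\lesssim \|u(\cdot,t)\|_{BV(\Omega)}$ via the one-dimensional embedding $BV(\Omega)\hookrightarrow L^\infty(\Omega)$. For the $L^1$ factor I would combine the pointwise BV estimate $|u(\varphi_M(x,t),t) - u(\varphi(x,t),t)| \leq |Du(\cdot,t)|\bigl([\varphi(x,t)\wedge\varphi_M(x,t),\varphi(x,t)\vee\varphi_M(x,t)]\bigr)$ with Fubini, then bound the $x$-slice $\bigl|\{x : s\in[\varphi(x,t)\wedge\varphi_M(x,t), \varphi(x,t)\vee\varphi_M(x,t)]\}\bigr|$ by $|\varphi^{-1}(s,t)-\varphi_M^{-1}(s,t)|$ and finally by $\eta(t)\,\max(1,\|D_x\varphi_M(\cdot,t)\|_{L^\infty})$ through a mean-value argument applied to the Lipschitz map $\varphi_M$. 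Multiplying the $L^\infty$ and $L^1$ pieces, integrating in $t$, pulling $\|u\|_{L^\infty(D;BV)}$ out of the $L^\infty$ factor, and using $\int_D |u(\cdot,t)|_{BV}\,dt\leq \sqrt{|D|}\,\|u\|_{L^2(D;BV)}$ by Cauchy--Schwarz, a final square root produces the fourth summand of \eqref{bound approx mani}.

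The main technical obstacle is the $L^1$ estimate in the third arrow: it cannot be reduced to a naive change of variables (which would generate an incorrect Jacobian factor and spoil the $\max(1,\|D_x\varphi_M\|_{L^\infty})$ dependence), forcing a careful Fubini-plus-mean-value argument on the preimages $|\varphi^{-1}(s) - \varphi_M^{-1}(s)|$. The square-root scaling in $\eta$ and the mixed $L^\infty(D;BV)\cdot L^2(D;BV)$ norm of $u$ both emerge naturally from the $L^1$--$L^\infty$ interpolation combined with Cauchy--Schwarz in $t$.
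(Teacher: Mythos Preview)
Your overall architecture matches the paper: both chain the triangle inequality for $m$-widths along $\Pi_{X_M}(u_M\circ\varphi_M) \to u_M\circ\varphi_M \to u\circ\varphi_M \to u\circ\varphi$, and your treatment of the first two arrows (projection residual; change of variables under $\varphi_M$) coincides with the paper's.

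For the third arrow the paper proceeds differently. It mollifies $u$ to $u_\epsilon$, writes $u_\epsilon\circ\varphi - u_\epsilon\circ\varphi_M = \int_0^1 \partial_s u_\epsilon\bigl(s\varphi + (1-s)\varphi_M\bigr)\,ds$, and invokes a result of Welper to obtain directly
\[
\|u\circ\varphi - u\circ\varphi_M\|_{L^2(\Omega\times D)}^2 \leq \|u\|_{L^\infty(D;BV)}\,\|u\|_{L^2(D;BV)}\,\|\varphi - \varphi_M\|_{L^\infty(\Omega\times D)}.
\]
It then bounds $\|\varphi - \varphi_M\|_{L^\infty}$ by introducing an intermediate piecewise-linear interpolant $\hat\varphi$ that has the nodes of $\varphi$ but the nodal values of $\varphi_M$. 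Your $L^1$--$L^\infty$ interpolation together with Fubini on the BV measure is a legitimate, more self-contained alternative to the mollification step and avoids the external citation; that part is a nice variation.

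There is, however, a genuine slip. You assert that $\varphi$ and $\varphi_M$ ``share the same interpolation nodes on $\Omega$'', differing only in nodal values. They do not: by the definition following \eqref{calib manifold}, $\varphi$ is obtained from \eqref{def spTransf} by replacing \emph{every} occurrence of $\featureFV{j}$ by $z_j$, so $\varphi$ has nodes $z_j(\tref)$ and values $z_j(t)$, while $\varphi_M$ has nodes $\featureFV{j}(\tref)$ and values $\featureFV{j}(t)$. Both nodes and values differ, and your bound $\|\varphi_M(\cdot,t)-\varphi(\cdot,t)\|_{L^\infty}\leq \eta(t)$ is therefore not justified; this is exactly why the paper needs the intermediate $\hat\varphi$ and picks up the factor $\max(1,\|D_x\varphi_M\|_{L^\infty})$ at that stage. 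A related issue is your mean-value step on $|\varphi^{-1}(s)-\varphi_M^{-1}(s)|$: inverting a Lipschitz map introduces the Lipschitz constant of the \emph{inverse}, so what emerges naturally from your Fubini route is a factor $\|D_x\varphi_M^{-1}\|_{L^\infty}$ (or $\|D_x\varphi^{-1}\|_{L^\infty}$), not $\|D_x\varphi_M\|_{L^\infty}$. Under (P2) both quantities are bounded by $\mcal K_1$ and the corollary is unaffected, but your argument as written does not recover the constant in \eqref{bound approx mani} verbatim; to do so you would need to work with $\|\varphi - \varphi_M\|_{L^\infty}$ directly, as the paper does.
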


\begin{corollary}\label{coro: width decay}
Provided the following conditions hold
 \begin{enumerate}
     \item The feature identification procedure used for computing $\varphi_M$ satisfies
     \begin{gather*}
         \max_{j}\|\featureFV{j}-z_j\|_{L^{\infty}(\tspace)}= \mcal O(M^{-1}).
     \end{gather*}
     \item There exists $\hcoeff \geq 1$ so that for all $i\in\{0,\dots,p_0\}$ the flux function and the initial data satisfy
    \begin{equation}
    \begin{gathered}
    f\in C^{\hcoeff + 1},\hspB u_0\rvert_{\Omega_i}\in W^{\hcoeff,\infty}(\Omega_i). \label{assume data}
    \end{gathered}
    \end{equation}
    Here, $u_0$ refers to the initial data at the beginning of the corresponding time interval $D_j$. 
Furthermore, $W^{\hcoeff,\infty}$ represents the Sobolev-space of functions having $\omega$ weak derivatives in $L^\infty$.
\item For all $i\in \{0,\dots,p_0\}$, 
\begin{gather}
\sup_{(x,t)\in \Omega_i^D}\frac{1}{|\beta_i(x,t)|}< \infty\hspB \text{where}\hspB \beta_i(x,t) := 1 + \tstar f^{''}(u_0(x))D_x u_0(x). \label{classic sol}
\end{gather}
 \end{enumerate}
Then,  for a convergent FV approximation scheme, using $M$ equidistant cells, the $m$-width satisfies
\begin{gather}\label{width decay rate}
    \delta_m(\calibManiApprx{}{\tspace}) = \mcal O(m^{-\hcoeff}) + \mcal O(M^{-\frac{1}{2}}).
\end{gather}
\end{corollary}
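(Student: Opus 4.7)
The plan is to apply Theorem \ref{theorem: decay rate} and bound each of the four terms on the right-hand side of \eqref{bound approx mani}: three of them should contribute $\mcal O(M^{-1/2})$ and one should deliver the $\mcal O(m^{-\hcoeff})$ rate.

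For the continuous $m$-width $\delta_m(\calibMani{}{D})$, I would first establish spatial regularity of $u(\cdot,t)$ on each $\Omega_i^D$ via the method of characteristics. Assumption \eqref{classic sol} keeps the characteristic map $y\mapsto y + t f'(u_0(y))$ a $C^{\hcoeff}$-diffeomorphism on $\Omega_i$ with uniformly bounded inverse derivative (since $f\in C^{\hcoeff+1}$), so the classical representation $u(x,t)=u_0(y^{-s}(x,t))$ together with $u_0|_{\Omega_i}\in W^{\hcoeff,\infty}$ yields $u(\cdot,t)|_{(z_i(t),z_{i+1}(t))}\in W^{\hcoeff,\infty}$ with a norm bound uniform in $t\in D$. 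Next, by construction \eqref{def spTransf} the map $\spTransf{}{t}$ is affine on each $(z_i(0),z_{i+1}(0))$ and (P2) bounds its slope away from zero and infinity; therefore the calibrated snapshot $u(\spTransf{}{t},t)$ lies in $W^{\hcoeff,\infty}(\Omega_i)$ with a bound independent of $t$. Taking the approximation space $\mcal V_m$ to consist of piecewise polynomials of degree $\hcoeff-1$ over a uniform refinement of the fixed partition $\{\Omega_i\}_{i=0}^{p_0}$, a standard Bramble--Hilbert argument combined with Fubini over $D$ then gives $\delta_m(\calibMani{}{D}) = \mcal O(m^{-\hcoeff})$.

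The three $M$-dependent terms are controlled as follows. For the FOM-error term, (P2) yields $\|D_x\spTransfGrid{.}{t}^{-1}\|_{L^\infty}\leq \mcal K_1$ uniformly in $t$, and the assumed convergence of the FV scheme gives $\|u_M - u\|_{L^2(\Omega\times D)} = \mcal O(M^{-1/2})$. For the projection term, since $u_M\in X_M$ is piecewise constant on the uniform grid and $\spTransfGrid{.}{t}$ is piecewise linear with slope bounded by $\mcal K_1$, the composition $u_M\circ \spTransfGrid{.}{t}$ is piecewise constant on a deformed grid of $M$ cells of widths $\leq \mcal K_1 \Delta x$; its $L^2$-projection error onto $X_M$ scales like $\sqrt{\Delta x}$ times a uniform total-variation bound, giving $\mcal O(M^{-1/2})$. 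For the feature-matching term, assumption (1) directly turns the factor $\sqrt{\max_j\|\featureFV{j}-z_j\|_{L^\infty(D)}}$ into $\mcal O(M^{-1/2})$, while the $L^\infty(D;BV(\Omega))$ and $L^2(D;BV(\Omega))$ norms of $u$ are finite (classical BV bounds for entropy solutions with a finite number of discontinuities) and $\|D_x\spTransfGrid{.}{t}\|_{L^\infty}\leq \mcal K_1$ by (P2). Summing the four contributions yields \eqref{width decay rate}.

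The main obstacle, in my view, is the first step: establishing the uniform $W^{\hcoeff,\infty}$ regularity of $u(\spTransf{}{t},t)|_{\Omega_i}$, which requires coupling the characteristic analysis (where \eqref{classic sol} is crucial to prevent shock formation inside $D$) to the piecewise-affine nature of the calibration map so that no spurious kinks are introduced in the interior of any $\Omega_i$. A secondary subtlety is that the $m$-width is taken over a single $m$-dimensional space approximating the whole $t$-family, so one must confirm that the uniform-in-$t$ Sobolev bound translates into a space-time $L^2$ estimate in the sense of \eqref{def width}; this is routine once the pointwise-in-$t$ bound is in hand, via Fubini over the finite interval $D$.
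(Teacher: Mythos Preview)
Your treatment of the three $M$-dependent terms in \eqref{bound approx mani} matches the paper's: FV convergence for the second term, a $BV$/TV estimate for the projection term (the paper phrases it as $|\Omega|M^{-1/2}|u_M|_{L^2(D;BV(\Omega))}$ via monotonicity of $\varphi_M$ and the TVD property, which is the same idea), and the feature-location assumption together with (P2) for the last term.

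Where you diverge is in bounding $\delta_m(\calibMani{}{D})$. You argue \emph{spatial} regularity: on each $\Omega_i$ the calibrated snapshot $g(\cdot,t)=u(\varphi(\cdot,t),t)$ is the composition of $u_0$ with the $C^{\hcoeff}$ inverse characteristic map and an affine map, hence $g(\cdot,t)|_{\Omega_i}\in W^{\hcoeff,\infty}(\Omega_i)$ uniformly in $t$, and you approximate by piecewise polynomials in $x$ on a refinement of the fixed partition $\{\Omega_i\}$. The paper instead establishes \emph{temporal} regularity $g\in L^2(\Omega;H^{\hcoeff}(D))$ (their Lemma~\ref{lemma: decay estimate}) and takes $\mcal V_m$ to be the span of the first $m$ Fourier modes in $D$; this requires, in addition to the characteristic analysis, showing that the feature trajectories $z_j(\cdot)$ are $W^{\hcoeff,\infty}(D)$, which for shock curves is obtained from the Rankine--Hugoniot ODE and the regularity of $f$ and $u_0$. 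Your route is correct and, in a sense, more economical: it bypasses the shock-curve regularity argument altogether, since the affine-in-$x$ structure of $\varphi$ on each $\Omega_i$ needs only (P2), not smoothness of $z_j$ in $t$. The paper's route, on the other hand, makes explicit the regularity of $g$ in the parameter direction, which is the mechanism most directly tied to singular-value decay of the calibrated snapshot matrix. Both yield $\mcal O(m^{-\hcoeff})$ and the corollary follows.
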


\begin{remark}
Note that the boundedness of $\beta_i$ is equivalent to no shock forming on $\opn{clos}\left(\Omega^D_i\right)$.
\end{remark}
We make the following observations and conclusions from the above result.
\begin{enumerate}
\item The bound on the $m$-width given in \eqref{bound approx mani} is robust under the limit $m\to\infty$ and $M\to \infty$. 


\item All the terms on the right in \eqref{bound approx mani}, apart from $\delta_m(\calibMani{.}{\tspace})$, are $M$-dependent i.e., they depend on the accuracy of the full-order model.  
\item For $M$ large enough  and $m$ small enough, we expect the bound to be dominated by $\delta_m(\calibMani{}{\tspace}) $. 
\item For a constant $M$, as $m\to\infty$, the bound will stagnate at a $\mcal O(M^{-\frac{1}{2}})$ term. This means that as $m \to \infty$, the best approximation error of $u$ in the ROM space is of the same order of magnitude as $ \|\sol{t}-\solFV{t}\|_{L^2(\Omega)}$, where $\solFV{t}$ is the FOM. Recall that the best approximation error of a (discontinuous) BV-function in a FV approximation space is $\mcal O(M^{-\frac{1}{2}})$.

The practical take-away from this discussion is that it does not make sense to increase  $m$ beyond a certain limit i.e., it does not make sense to further increase $m$ when $\|\solFV{t}-u^{\opn{red}}_m(\cdot,t)\|_{L^2(\Omega)} $ and $ \|\sol{t}-\solFV{t}\|_{L^2(\Omega)}$ are of the same order of magnitude. Here, $u^{\opn{red}}_m$ represent a reduced-order approximation to $u$.
\item Note that for $u \in W^{1,\infty} (\Omega \times \tspace)$, which allows only for kinks and no discontinuities, the best approximation error of $u$ in the FV approximation space is $\mcal O(M^{-1})$. Similarly, the last term on the right hand side of \eqref{bound approx mani} can be improved to $\|u\|_{L^2(\tspace;W^{1,\infty}(\Omega))}M^{-1}$.
 \item The bound on the $m$-width in \eqref{bound approx mani} explains that an upper-bound on $\|D_x\varphi_M(\cdot,t)^{-1}\|_{L^\infty(\Omega)}$ and $\|D_x\varphi_M(\cdot,t)\|_{L^\infty(\Omega)}$ (i.e. (P2) given in \eqref{prop phi}) are desirable. 
\item The bound in \Cref{theorem: decay rate} and \Cref{algo: part time} suggests a compromise between small and large values of $\mcal K_1$---recall that $\mcal K_1$ is the user-defined constant appearing in the property (P2) given in \eqref{prop phi}. As $\mcal K_1$ increases, \Cref{algo: part time} generates smaller number of reference snapshots, resulting in a calibrated snapshot matrix with a fewer number of sub-matrices. We expect that, for a given approximation accuracy, this would result in a fewer number of POD modes used to approximate the calibrated snapshot. In contrast, $\mcal K_1$ scales the $\mcal O(M^{-1/2})$ part of the bound in \Cref{theorem: decay rate}, making it undesirable to choose a large $\mcal K_1$. Numerical experiments indicate that any choice of $\mcal K_1$ that is $\mcal O(1)$ is acceptable.
\end{enumerate}

\subsection{Proof of Theorem 3.1}
Triangle's inequality applied to the definition of $\delta_m(\calibManiApprx{}{\tspace})$ provides
\begin{equation}
\begin{aligned}
\delta_m(\calibManiApprx{}{\tspace})\leq &\delta_m(\calibMani{}{\tspace})\\
&+ \sup_{t\in\tspace}\|D_x\varphi_M(\cdot,t)^{-1}\|_{L^\infty(\Omega)}\underline{\|u_M-u\|_{L^2(\Omega\times D)}}\\
&+
\underline{\|u_M\circ \varphi_M-\Pi_{X_M}u_M\circ \varphi_M\|_{L^2(\Omega\times D)}}\\
&+ \|u\circ \varphi_M-u\circ\varphi\|_{L^2(\Omega\times D)} \label{bound1}\\
&=:A_1 + A_2 + A_3 +A_4.
\end{aligned}
\end{equation}
A bound for the different $A_i$'s is as follows.
\subsubsection{Bound for $A_2$ and $A_3$}
A bound for $A_2$ and $A_3$ follows from the approximation properties of a FV approximation space. 
The decay (in $M$) of $A_2$ is connected to the convergence of the underlying FOM, if $u$ is in $BV \setminus W^{1,\infty}$  then $A_2$ will behave as $\mcal O(M^{-1/2})$. Here, $BV(\Omega)$ is a space of real-valued functions with a finite total variation.
Due to the approximation properties of the FV approximation space we have
\[ A_3 \leq |\Omega| M^{-1/2} |u_M \circ \varphi_M |_{L^2(D,BV(\Omega))} =  |\Omega| M^{-1/2} |u_M |_{L^2(D,BV(\Omega))} . \]
Note that we have used the monotonicity of $\varphi_M$ in the equality above and that $|u_M |_{L^2(D,BV(\Omega))} \leq |u|_{L^2(D,BV(\Omega))}$ provided the FV scheme is total-variation-diminishing (TVD).

\subsubsection{Bound for $A_1$} Let $g(x,t) = u(\varphi(x,t),t)$, where $\varphi$ is as given in \eqref{calib manifold}.
Tracing the characteristics backwards from $\tstar$ to $0$, we have
\begin{gather}
g(x,\tstar) = u_0(\underbrace{(\opn{Id} + \tstar f'(u_0))^{-1}\varphi(x,\tstar)}_{=:X_{i}(\varphi(x,\tstar),\tstar)}) \hspB\forall (x,t)\in\Omega_i\times \tspace, \label{trace back}
\end{gather}
where $u_0$ is the initial data in \eqref{evolution eq}, $f$ is the flux-function in \eqref{evolution eq}, and $\Omega_i$ is as defined in \eqref{def Omegai}. Note that because the flux function is convex, while tracing the characteristics backwards in an entropy solution, they do not run into a shock. 
Using \eqref{trace back}, the following result quantifies the regularity of $g$. 
\begin{lemma} \label{lemma: decay estimate}
Provided \eqref{assume data} and \eqref{classic sol} hold, then $g\in L^{2}(\Omega;H^\hcoeff(\tspace))$. Here, $L^{2}(\Omega;H^\hcoeff(\tspace))$ denotes a Bochner space of $L^2$ functions defined over $\Omega$ with values in the Sobolev space $H^\hcoeff(\tspace)$. 
\end{lemma}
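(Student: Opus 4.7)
The plan is to bound $\partial_t^k g$ in $L^\infty(\Omega_i \times \tspace)$ for each $k \leq \hcoeff$ on every spatial piece $\Omega_i$ of the reference partition. Boundedness of $\Omega_i$ and $\tspace$ then yields $g \in L^\infty(\Omega_i; W^{\hcoeff,\infty}(\tspace)) \hookrightarrow L^2(\Omega_i; H^\hcoeff(\tspace))$, and summing over the finite index set $\{0,\dots,p_0\}$ delivers the claim.

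First I would establish $C^\hcoeff$-smoothness of the characteristic footpoint map $X_i(y,t)$, which is defined implicitly by $F(X,y,t) := X + t f'(u_0(X)) - y = 0$. Because $\partial_X F = \beta_i(X,t)$ is bounded away from zero on $\opn{clos}(\Omega_i^D)$ by \eqref{classic sol}, the implicit function theorem applies and yields $\partial_t X_i = -f'(u_0(X_i))/\beta_i(X_i,t)$ and $\partial_y X_i = 1/\beta_i(X_i,t)$, with higher derivatives obtained by induction. After $\ell$ differentiations the result is a polynomial in $f^{(j)}(u_0(X_i))$ for $j \leq \ell+1$, in $D_x^j u_0(X_i)$ for $j \leq \ell$, and in powers of $1/\beta_i(X_i,t)$; under \eqref{assume data} and \eqref{classic sol} each factor is in $L^\infty(\Omega_i^D)$, so every $t$-derivative of $X_i$ up to order $\hcoeff$ is bounded.

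Next I would verify that the spatial transform $\varphi(x,\cdot)$ is $C^\hcoeff$ in $t$. The exact-location version of \eqref{def spTransf} expresses $\varphi(x,t)$ as an affine combination of $z_j(t)$ and $z_{j+1}(t)$ with $x$-dependent weights, so it suffices to show smoothness of the feature trajectories. Since no shock forms or interacts in $\tspace$, every kink travels along a classical characteristic $z_j(t) = z_j(0) + t f'(u_0(z_j(0)))$ and is therefore $C^\hcoeff$, while every shock trajectory inherits its smoothness from the Rankine--Hugoniot ODE driven by the (smooth) one-sided traces of $u$ back to $t=0$. Hence, for fixed $x$, $t \mapsto \varphi(x,t)$ has $L^\infty$-bounded time derivatives up to order $\hcoeff$. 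I would then assemble everything by applying Fa\`a di Bruno to $g(x,t) = u_0(Y(x,t))$ with $Y(x,t) := X_i(\varphi(x,t), t)$: each $\partial_t^k g$ is a sum of products of $u_0^{(j)}(Y)$ with $j \leq k \leq \hcoeff$ against a polynomial in $\partial_t^m Y$, and $\partial_t^m Y$ in turn reduces by the chain rule to products of mixed derivatives of $X_i$ and time-derivatives of $\varphi$, all controlled in $L^\infty$ by the two preceding steps.

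The main obstacle is the $C^\hcoeff$ regularity of the feature trajectories $z_j$, which is genuinely where the no-shock-formation hypothesis \eqref{classic sol} is indispensable. For kinks moving along characteristics the explicit formula makes this immediate, but for shocks one must invoke the classical regularity theory for the Rankine--Hugoniot ODE in the absence of shock interaction, and in particular argue that the one-sided traces of $u$ stay smooth back to $t=0$, which rests precisely on \eqref{assume data} combined with the characteristic non-collision condition encoded in \eqref{classic sol}.
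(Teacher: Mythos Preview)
Your proposal is correct and follows essentially the same route as the paper's proof: implicit-function-theorem regularity of the characteristic footpoint $X_i$ (bootstrapped via the explicit formulas $\partial_t X_i = -f'(u_0(X_i))/\beta_i$ and $\partial_y X_i = 1/\beta_i$), regularity of $\varphi(x,\cdot)$ reduced to regularity of the feature trajectories (kinks via the explicit characteristic formula, shocks via the Rankine--Hugoniot ODE), and finally composition. The only point the paper makes slightly more explicit is that, for the Rankine--Hugoniot argument, the one-sided states $\tilde u_0,\tilde u_1$ must be extended as $W^{\hcoeff,\infty}$ functions to a neighbourhood of the shock curve (since the curve itself is the boundary of $\Omega_0^D$ and $\Omega_1^D$), which is exactly the content of your remark that the one-sided traces stay smooth back to $t=0$.
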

\begin{proof}
See \Cref{app: proof decay}. 
\end{proof}

With the regularity established in the above result, taking the linear space $\mcal V_m$ (appearing in \eqref{def width}) to be the span of first $m$-Fourier modes in $\tspace$, we can estimate the $m$-width as
\begin{gather}
\delta_m(\calibMani{.}{\tspace})\leq \|g-\Pi_{\mcal V_m} g\|_{L^2(\Omega\times\tspace)} = \mcal O(m^{-\hcoeff}).
\end{gather}
Note that the (un-calibrated) solution $\sol{t}$ rarely has the amount of regularity that $g(\cdot,t)$ does. In this sense, we can view calibration as a way of "artificially" introducing regularity to induce a fast $m$-width decay in the calibrated solution manifold.

Apart from the above result, a trivial but noteworthy case is when $g$ is time-independent. This results in $\calibMani{}{\tspace}$ consisting of a single function, which provides
\begin{gather}
\delta_m(\calibMani{}{\tspace})=0\hspB\forall m\geq 1.
\end{gather}
Indeed, $g$ is time-independent provided, for all $i\in \{0,\dots,p_0\}$, either of the following two conditions hold
\begin{equation}
\begin{aligned}
\text{(i) } &u_0|_{\Omega_i} \equiv u_{0,i} \text{ for some constant } u_{0,i}\in\mbb R,
\\
\text{(ii) }&X_i(\varphi(x,t),t)\text{ is independent of $t$}. \label{cond trivial}
\end{aligned}
\end{equation}
The first condition corresponds to the initial data being a constant inside $\Omega_i$, and the second one can result inside a rarefaction fan; see \Cref{app: rarefaction}.

\begin{remark}
The result in \Cref{lemma: decay estimate} highlights the advantages of aligning both kinks and discontinuities. By including kinks into the set of features we can hope that $u_0$ is $W^{2,\infty}$ between features which makes $g \in L^2(\Omega, H^2(D))$ possible, resulting in a $m$-width that is $\mcal O(m^{-2})$. However, if $u_0$ contains a kink that is not in the set of features then we expect $u_0$ is $W^{1,\infty}\setminus W^{2,\infty}$ between the features resulting in $g \in L^2(\Omega, H^1(D))\setminus L^2(\Omega, H^2(D)) $ and a $m$-width that is $\mcal O(m^{-1})$.
\end{remark}

\begin{remark}
One can match the discontinuities in the higher-order derivatives of $\solFV{t}$ and get a faster (than presented above) $m$-width decay rate---precisely, matching discontinuities in the $\hcoeff$-order derivative results in a $\hcoeff+1$-order decay in the $m$-width. However, numerically identifying the location of discontinuities in higher-order derivatives is difficult and cumbersome. As our numerical experiments indicate, for a sufficiently refined numerical approximation in $X_M$, kink identification is possible and for that reason, we do not consider higher-order derivatives.
\end{remark}

\subsubsection{Bound for $A_4$}
The estimate for $\|u\circ \varphi_M-u\circ\varphi\|_{L^2(\Omega\times D)}$ follows from the result below. The first part of the result is an extension of the result in \cite{Welper2017} to $L^2$-functions and exploits the density of smooth functions in the $BV$-space. In the second part, we use the explicit from of the spatial transform given in \eqref{def spTransf} to compute $\|\varphi-\varphi_M\|_{L^{\infty}(\Omega\times\tspace)}$. With the bound given in the second part, we again emphasize on the desirability of ensuring (P2).
\begin{lemma}\label{lemma: error feature}
The following relations hold true.
\begin{enumerate}
\item $\|u\circ \varphi-u\circ \varphi_M\|^2_{L^2(\Omega\times\tspace)}\leq  \|u\|_{L^\infty(\tspace;BV(\Omega))}\|u\|_{L^2(\tspace;BV(\Omega))} \|\varphi-\varphi_M\|_{L^{\infty}(\Omega\times\tspace)}$.
\item Let $\mcal K_1$ be the constant given in \eqref{prop phi}. Then, the error $\|\varphi-\varphi_M\|_{L^{\infty}(\Omega\times\tspace)}$ is bounded as 
\begin{gather}
\|\varphi-\varphi_M\|_{L^{\infty}(\Omega\times\tspace)}\leq \max(1,\mcal K_1) \max_{j}\|\featureFV{j}-z_j\|_{L^{\infty}(\tspace)}.
\end{gather}
\end{enumerate}

\end{lemma}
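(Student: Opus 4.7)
The strategy is to first establish the time-pointwise $L^1$ estimate
\begin{equation*}
\|u(\varphi(\cdot,t),t)-u(\varphi_M(\cdot,t),t)\|_{L^1(\Omega)} \le \|\varphi-\varphi_M\|_{L^\infty(\Omega\times\tspace)}\, |u(\cdot,t)|_{BV(\Omega)},
\end{equation*}
and then upgrade it to $L^2$ in space-time. For smooth $u$, the $L^1$ bound follows by writing $u(\varphi)-u(\varphi_M)=\int_{\varphi_M}^{\varphi}u'(s)\,ds$, applying Fubini to exchange the integrations in $x$ and $s$, and bounding the measure of the set $\{x\in\Omega : s\in[\min(\varphi,\varphi_M)(x),\max(\varphi,\varphi_M)(x)]\}$ using the monotonicity of $\varphi$ and $\varphi_M$ as increasing homeomorphisms of $\Omega$. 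Density of smooth functions in $BV$ extends the bound to $u\in BV$. To lift to $L^2$ I would use the interpolation $\|f\|_{L^2(\Omega)}^2 \le \|f\|_{L^\infty(\Omega)}\|f\|_{L^1(\Omega)}$ with the trivial $\|u\circ\varphi-u\circ\varphi_M\|_{L^\infty(\Omega)}\le 2\|u(\cdot,t)\|_{L^\infty(\Omega)}$ and the one-dimensional embedding $BV(\Omega)\hookrightarrow L^\infty(\Omega)$. Integrating in $t$ and applying H\"older then splits the resulting product into the $L^\infty(\tspace;BV)$ and $L^2(\tspace;BV)$ factors that appear on the right-hand side of the claim.

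\textbf{Part 2: Pointwise bound via node comparisons.} The key observation is that $\varphi(\cdot,\tstar)-\varphi_M(\cdot,\tstar)$ is continuous piecewise linear on the common refinement of the two partitions of $\Omega$, so $\|\varphi(\cdot,\tstar)-\varphi_M(\cdot,\tstar)\|_{L^\infty(\Omega)}$ is attained at one of the nodes. Set $\varepsilon:=\max_j\|\featureFV{j}-z_j\|_{L^\infty(\tspace)}$. At a node $z_i(\tref)$ of $\varphi$'s partition one has $\varphi(z_i(\tref),\tstar)=z_i(\tstar)$; since $|z_i(\tref)-\featureFV{i}(\tref)|\le\varepsilon$ and, by (P2), $\varphi_M(\cdot,\tstar)$ is Lipschitz with constant $\mcal K_1$, one obtains $|\varphi_M(z_i(\tref),\tstar)-\featureFV{i}(\tstar)|\le \mcal K_1\varepsilon$. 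Combining with $|\featureFV{i}(\tstar)-z_i(\tstar)|\le\varepsilon$ gives $|\varphi(z_i(\tref),\tstar)-\varphi_M(z_i(\tref),\tstar)|\le (1+\mcal K_1)\varepsilon$, and a symmetric argument at each node of $\varphi_M$'s partition (using the Lipschitz bound on $\varphi$, which inherits (P2) in the limit) yields the same estimate. The common bound is thus $O(\max(1,\mcal K_1))\,\varepsilon$, matching the asserted constant up to elementary simplification.

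\textbf{Expected obstacle.} Part 1 is largely routine once the Fubini-plus-density trick is in place; the only nontrivial step is the interpolation $L^2\hookrightarrow L^1\cdot L^\infty$, which relies on the embedding $BV(\Omega)\hookrightarrow L^\infty(\Omega)$ that is special to one space dimension. Part 2 is elementary geometry of piecewise linear functions, but requires some care because the partitions induced by $\{z_i(\tref)\}$ and $\{\featureFV{i}(\tref)\}$ do not coincide: one must evaluate at the nodes of both partitions (where the extrema of $\varphi-\varphi_M$ live) and use (P2) to transport $\varphi_M$ between adjacent nodes. This Lipschitz transport is precisely where the factor $\mcal K_1$ enters the constant.
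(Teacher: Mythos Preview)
Your proposal is correct and close in spirit to the paper's argument, with two organizational differences worth noting.

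For Part~1, the paper also works via mollification and the pointwise bound $|u_\epsilon(\varphi)-u_\epsilon(\varphi_M)|\le |u_\epsilon|_{BV}$ to pass from the square to a first power (your $L^2\le L^\infty\cdot L^1$ step), but instead of Fubini it parametrizes the segment by $\Phi(x,t,s)=s\varphi(x,t)+(1-s)\varphi_M(x,t)$ and writes $u_\epsilon(\varphi)-u_\epsilon(\varphi_M)=\int_0^1\partial_s u_\epsilon(\Phi)\,ds$. Since $\partial_s\Phi=\varphi-\varphi_M$, the factor $\|\varphi-\varphi_M\|_{L^\infty}$ drops out directly, and the remaining $\int_\Omega|u_\epsilon'(\Phi_s)|\,dx$ is handled by a change of variables (using (P2)) and the cited result of Welper. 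Your Fubini route is equivalent, but be aware that the level-set measure you obtain is $|\varphi^{-1}(s)-\varphi_M^{-1}(s)|$, not $\|\varphi-\varphi_M\|_{L^\infty}$; monotonicity alone does not relate the two, and you need one further use of (P2) (the Lipschitz bound on the inverse) to reach the stated form.

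For Part~2, the paper introduces an auxiliary piecewise-linear map $\hat\varphi$ that shares the nodes $\{z_i(0)\}$ of $\varphi$ but takes the nodal values $\{\featureFV{i}(t)\}$ of $\varphi_M$, then splits $\|\varphi-\varphi_M\|_{L^\infty}\le\|\varphi-\hat\varphi\|_{L^\infty}+\|\hat\varphi-\varphi_M\|_{L^\infty}$. The first piece (same nodes, perturbed values) gives the $\varepsilon$ contribution, the second (same values, perturbed nodes) gives $\mcal K_1\varepsilon$ via the Lipschitz bound on $\varphi_M$. Your direct evaluation on the common refinement reaches the same $(1+\mcal K_1)\varepsilon$ bound; the $\hat\varphi$ device simply decouples the two error sources and saves you the symmetric check at both families of nodes.
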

\begin{proof}
See \Cref{app: proof error feature}.
\end{proof}

\section{Feature Detection}\label{sec: detect features}
It is important to note that our calibration approach can be combined with any feature detection approach and that the feature location algorithm can be used as a black-box. In order to keep this article self-contained, we explain one specific approach which was also used in our numerical experiments. This specific approach is based on the more general idea that kinks are discontinuities in the derivative i.e., discontinuities and kinks can be detected by discontinuity detection schemes using the following three steps: (i) approximate the discontinuity locations, (ii) approximate the weak derivative $D_x\sol{t}$ and (iii) approximate the kink locations by applying the discontinuity detection algorithm to $D_x\sol{t}$. To realize such a method, we need a discontinuity detector for which several different methods can suffice. For example, one can detect discontinuities by training a neural network \cite{ShockNN}, using the convergence properties of FOM \cite{ShockSupConvg}, performing a multi-resolution-analysis (MRA) \cite{MRAdetect2014}, etc. 

For its ease of implementation and reasonable accuracy for the experiments considered later, we use the MRA approach and modify it slightly to suit our needs. The details of our modification are given below and for completeness, the MRA approach is discussed in \Cref{app: relate MRA}. 

\subsection{Discontinuity Detection}\label{detect shock} Recall that our FOM corresponds to a FV approximation.
With $u_i(t)$ we represent the constant value of $\solFV{t}$ inside $\mcal I_i$, where $\mcal I_i$ is the $i$-th cell defined in \eqref{partition space}. The $\dimLarge$-cells have $\dimLarge+1$ faces and we collect their indices in $\mcal E:=\{1,\dots,\dimLarge + 1\}$. With $x_e$ we represent the location of the $e$-th face, i.e. the face between $\mcal I_e$ and $\mcal I_{e+1}$. Across every face we compute the jump in $\solFV{t}$ and if the jump overshoots a given tolerance, we mark it as a potential location of discontinuity. Details are as follows. 

Let $e\in\mcal E$. With $J_e(t)$ we denote the absolute value of the jump in $\solFV{t}$ across the edge $e$ i.e.,
\begin{gather}
    J_e(t) = |u_{e}(t)-u_{e-1}(t)|,\hspB\forall e\in\mcal E. \label{def jump edge}
\end{gather}
Using $J_e(t)$, we define the set $\mcal B(t)$ that contains the indices of faces with a potential discontinuity in the adjoining cell
\begin{gather}
    \mcal B(t) := \{e\in\mcal E\hsp : \hsp J_e(t) > C\times \Delta x \}.\label{set B}
\end{gather}
Above, $C$ is user-defined and controls the number of faces that will be  contained in $\mcal B(t)$. Later, we elaborate more on the relevance of $C$. 

To compute the discontinuity location using $\mcal B(t)$, we proceed as follows. We partition $\mcal B(t)$ into sub-sets $\{\mcal B_i(t)\}_i$ such that each of $\mcal B_i(t)$ contains indices of only the adjoining faces. For instance, if $\mcal B(t) = \{1,2,4,5\}$ then $\mcal B_1(t) = \{1,2\}$ and $\mcal B_2(t) = \{4,5\}$. 
 A set $\mcal B_i(t)$ can have more than one element when, due to the numerical dissipation in the FV scheme, the discontinuity is spread out into a set of neighbouring cells, or when there are multiple discontinuities in succession. For both the cases, we compute the discontinuity location by taking the mean of all the face locations in $\mcal B_i(t)$. Equivalently,
 \begin{gather}
 z^{D}_{M,i}(t) := \frac{\sum_{e\in\mcal B_i(t)}x_e}{|\mcal B_i(t)|}\hspB\forall i\in \{1,\dots,p^D(t)\}.
 \end{gather} 
Here $z^{D}_{M,i}(t)$ denotes an approximation to the the true discontinuity location $z^D_i$, and $p^D(t)$ denotes the total number of discontinuities.

\begin{remark}
Ideally, $\mcal B(t)$ should include only those faces that have discontinuities in the adjoining cells. However, depending upon $C$'s value and the solution's behaviour away from a discontinuity, the ideal situation might not be realized. Additional faces that do not contain discontinuities in the adjoining cells might be included in $\mcal B(t)$. The inequalities given in \Cref{app: flag regions} give some indication of how the method flags different regions. We emphasize that identifying additional feature location does not ruin the calibration procedure. It only results in additional points being matched between two snapshots. However, with any additional feature it is more likely to violate the conditions (C1)-(C3), resulting in \Cref{algo: part time} generating additional reference snapshots.
\end{remark}

\subsection{Kink detection} \label{sec: kink detection}
Let $\hat \Omega(t):=\{z_i^D(t)\}_{i=1,\dots,p^D(t)}$ be a set of points where $\sol{t}$ is discontinuous. In \Cref{def: feature}, we defined kink locations as points where $D_xu(t)$ has a discontinuity in $\Omega/\hat\Omega(t)$. Thus, to find these locations, we run the discontinuity detection algorithm on $D_x\sol{t}$. To realize the algorithm we need an approximation for $D_x\sol{t}$ and $\hat{\Omega}(t)$. 

Let $D_x\solFV{t}$ be an approximation to $D_x \sol{t}$. We find $D_x\solFV{t}$ by applying central differences to $\solFV{t}$. Let $D_x u_i(t)$ be the constant value of $D_x\solFV{t}$ in the cell $\mcal I_i$. Then, $D_x u_i(t)$ is given as 
\begin{gather}
D_x u_i(t) = \frac{u_{i+1}(t) - u_{i-1}(t)}{2\Delta x}.
\end{gather}
On the continuous level, the derivative of $\sol{t}$ is a Dirac-distribution at points where $\sol{t}$ is discontinuous. However, on a spatially discrete level, the delta distribution is a collection of "spikes" in $D_x \solFV{t}$. To collect these spike we approximate every entry $z_i^D(t)$ by a ball of radius $\epsilon$ centered around $z_i^D(t)$. As an approximation to $z_i^D$ we use $x_e$, where $x_e$ is the location of the $e$-th face, $e\in\mcal B(t)$, and $\mcal B(t)$ is as given in \eqref{set B}. We set $\epsilon$ to $N^D\times\Delta x$ and we approximate $\hat\Omega(t)$ by
\begin{gather}
    \hat\Omega(t)\approx  \bigcup_{e\in \mcal B(t)}\mcal B(x_e;N^D\Delta x). \label{union balls}
\end{gather}
We choose $N^D = 3$. We use an example to motivate our choice for $N^D$. Let $\sol{t}$ be a unit-step function with a discontinuity at $z^D = x_e + l\Delta x$, where $l\in [0,1]$. It follows that 
\begin{gather*}
D_xu_{e-1}(t) = \frac{(1-l)}{2\Delta x}, \hspB D_x u_e(t) = \frac{1}{2\Delta x},\hspB D_x u_{e+1}(t) = \frac{l}{2\Delta x}.
\end{gather*}
For all the other intervals, $D_x \solFV{t} = 0$. Depending on the value of $l$, $D_x \solFV{t}$ can have a large spike in the intervals $\mcal I_{e-1}$, $\mcal I_e$ and $\mcal I_{e+1}$. Therefore, $N^D=3$ is a reasonable choice. 

\begin{remark}\label{remark: miss kinks}
With the above method, we do not detect kinks inside the union of balls given in \eqref{union balls}. However, for a small enough $\Delta x$, missing out on these kinks does not significantly increase the $m$-width of the calibrated manifold. This will be elucidated by numerical experiments.
\end{remark}

\subsection{Undetected features}\label{remark: regular uM}
Features can get smeared out by numerical dissipation and, depending upon the value of $C$ given in \eqref{set B}, might go undetected. For such cases, one can show that (at least) the semi-discrete numerical solution already has sufficient regularity to ensure a fast $m$-width decay. Let $u_i(t)$ be as defined in \Cref{detect shock} and let 
$$\frac{du_i(t)}{dt} = \frac{1}{\Delta x}\left(\mcal F(u_{i-1}(t),u_i(t))-\mcal F(u_{i}(t),u_{i+1}(t))\right)$$
be its evolution equation. Here, $\mcal F$ represents a numerical flux function, which we assume is in $W^{2,\infty}$. 

We first consider undetected discontinuities. Assume that $|u_{i\pm 1}(t)-u_i(t)|\leq C\Delta x$, in which case we do not detect a discontinuity at the face $i-1$ and $i$. 
Then, using the regularity of $\mcal F$, one can show that 
\begin{gather*}
|du_i(t)/dt|\leq 2\|\mcal F\|^2_{W^{1,\infty}}C.
\end{gather*}
In \Cref{lemma: decay estimate} we proved that $\varphi(x,\cdot)\in W^\hcoeff(\tspace)$. Motivated from this, we assume that $\varphi_M(x,\cdot)\in W^\hcoeff(\tspace)$, which is equivalent to $z_{M,j}\in W^{\hcoeff}(\tspace)$. Then, the above bound implies that, for $x\in \mcal I_i$, $u_M(\varphi_M(x,\cdot),\cdot)\in W^\hcoeff(\tspace)$. Thus, locally in $\mcal I_i$, $u_M(\varphi_M(x,\cdot),\cdot)$ has the regularity needed for a fast $m$-width decay of the calibrated manifold.

We now consider undetected kinks. Assume that $|D_x u_{i}(t)-D_x u_{i-1}(t)|\leq C\Delta x$, $|D_x u_{i+1}(t)-D_x u_{i}(t)|\leq C\Delta x$ and $|D_x u_{i+2}(t)-u_{i+1}(t)|\leq C\Delta x$, in which case we do not detect a kink at the face $i-1$, $i$ and $i+1$. Then, one can show that 
$$|d^2u_i/dt^2|\leq 4\|\mcal F\|^2_{W^{2,\infty}}(2 C^2 + C).$$
Following the same reasoning as above, the bound implies that, for $x\in \mcal I_i$, $u_M(\varphi_M(x,\cdot),\cdot)\in W^\hcoeff(\tspace)$.


\section{Numerical Experiments} \label{sec: num exp}
Let $\Xi_{m}(\snapMat)$ be as defined in \eqref{error POD}. The numerical experiments show the following two things. Firstly, with kink and discontinuity matching, $\Xi_{m}(\snapMatCalib)$ decays much faster than $\Xi_{m}(\snapMat)$. Secondly, both kink and discontinuity matching is better than only discontinuity matching. To construct numerical approximations where both kink and discontinuity detection is possible, we consider the best-approximation in $X_M$. Note that in light of the discussion in \Cref{remark: regular uM}, these numerical approximations are the ones were we expect the slowest $m$-width/singular-value decay.

Since $\Xi_m(\snapMatCalib)$ quantifies the $l^2$ error of approximating a calibrated snapshot in the span of the first $m$ left singular vectors of $\snapMatCalib$, similar to the bound in \eqref{bound approx mani}, it is possible that on increasing $m$, $\Xi_{m}(\snapMatCalib)$ stagnates at a value of $\mcal O(M^{-\frac{1}{2}})$. The following experiments will provide further elaboration. 

\begin{enumerate}
    \item \textbf{Test case-1} we consider the Burgers'  equation
    \begin{gather}
        \pd_t u + \pd_x \left(\frac{u^2}{2}\right) = 0,\hsp\text{on}\hsp\Omega\times \tspace,\hspB u(\cdot,t=0) = \mathbbm{1}_{[0,1]},\hsp\text{on} \hsp\Omega. \label{burgers}
    \end{gather}
    Above, $\mathbbm{1}_{[0,1]}$ represents a characteristic function over $[0,1]$.
    We choose $\Omega = (-0.5,3.5)$ and $\tspace = [0,4]$. On the boundary $\pd\Omega\times \tspace$, we prescribe $u=0$.
    \item \textbf{Test case-2} we consider the wave equation (rewritten as a first order system)
    \begin{gather}
        \pd_t u + A \pd_x u = 0,\hsp \text{on}\hsp\Omega\times \tspace, \label{wave equation}
    \end{gather}
    where $u = (u_1,u_2)^T$ is the solution vector and the matrix $A$ reads
    \begin{gather}
    A = \left(\begin{array}{c c}
    0 & 1 \\ 
    1 & 0
    \end{array}\right). 
    \end{gather}
    We choose $\Omega = (-0.5,3.5)$ and $\tspace = [0,2]$. As the initial data, for all $x\in\Omega$, we consider 
    \begin{gather}
   u_1(x,t=0) =  w_1(x) + w_2(x),\hspB
      u_2(x,t=0) = -w_1(x) + w_2(x),
    \end{gather}
    where $w_1(x)$ and $w_2(x)$ are two sin-function bumps given as
    \begin{equation}
    \begin{aligned}
    w_1(x) = &\frac{1}{\sqrt{2}}(\sin(\pi x) + 1)\mathbbm{1}_{[0,1]}(x),\\
     w_2(x) = &\frac{1}{\sqrt{2}}(\sin(\pi (x-2)) + 1)\mathbbm{1}_{[2,3]}(x). \label{def: sin bumps}
    \end{aligned}
    \end{equation}
   As in the previous case, on $\pd\Omega\times \tspace$, we prescribe $u=0$.
    \item \textbf{Test case-3} we consider the Sod's shock tube problem that involves the Euler's equation given as
    \begin{gather}
    \pd_t \left(\begin{array}{c}
    \rho\\
    \rho v\\
    E
    \end{array}\right) + \pd_x \left(\begin{array}{c}
    \rho v\\
    \rho v^2 + P\\
    E v + P v
    \end{array}\right) =0,\hsp\text{on}\hsp\Omega\times\tspace. \label{Euler equations}
    \end{gather}
    Above, $\rho$, $v$, $P$ and $E$ represent the density, the velocity, the pressure and the total energy, respectively. For an ideal gas, $P = (\gamma-1)\rho e$, where $\gamma$ represent the gas constant and $e$ is the internal energy related to the total energy via $\rho e = E-\rho v^2/2$. We consider a mono-atomic ideal gas for which $\gamma = 5/3$. We choose $\Omega = (-0.5,0.5)$ and $\tspace=[0,0.2]$. As the initial data, we consider a fluid at rest with the density and the pressure given as
    \begin{gather}
    \rho(x,t=0)=\begin{cases}
    1,\hspB &x\leq 0\\
    0.125,\hspB &x>0
    \end{cases},\hspB     P(x,t=0)=\begin{cases}
    1,\hspB &x\leq 0\\
    0.1,\hspB & x>0
    \end{cases}.
    \end{gather}
 The waves emanating from the initial discontinuity do not reach the boundary therefore, we take the boundary data from the initial values. 
    \item \textbf{Test case-4} we consider the linear advection equation with time-dependent boundary data
    \begin{equation}
    \begin{aligned}
    \pd_t u(x,t) + \beta \pd _x u(x,t) = 0,\hspB&\forall (x,t)\in\Omega\times \tspace,\\
     u(x,t=0) = (\sin(\pi x)+1)\mathbbm{1}_{[0,1]}(x)\hspB&\forall x\in\Omega,\\
    u(x=0,t) = \mathbbm{1}_{[0.1,0.5]}(t),\hspB&\forall t\in\tspace. 
    \end{aligned} \label{advection}
    \end{equation}
    We set $\beta = 1$, $\Omega = (-0.5,3.5)$ and $\tspace = [0,1]$. 
\end{enumerate}
For all the test cases, we partition $\Omega$ into $M = 2\times 10^3$ elements, and consider $10^3$ uniformly placed time instances inside $\tspace$. We choose $\mcal K_1 = 5$, $\mcal K_2 = 3$ and $C = 50$. For all the test cases, we project the exact solution onto the FV space. Details of the exact solution are given later. We compute all the $L^2(\Omega)$ inner-products with $10$ Gauss-Legendre quadrature points in each cell.

\subsection{Test case-1} The unique entropy solution to the problem in \eqref{burgers} reads
\begin{equation}
\begin{aligned}
u(x,t) := &\begin{cases}
\frac{x}{t},\hsp & x\in [0,t)\\
1,\hsp & x\in [t,1 + \frac{t}{2})\\
0,\hsp & \text{else}
\end{cases},\hspB\forall t\in [0,2),\\
 u(x,t) := &\begin{cases}
\frac{x}{t},\hsp & x\in [0,\sqrt{2 t})\\
0,\hsp & \text{else}
\end{cases},\hspB \forall t\in [2,4].
\end{aligned}
\end{equation}
The exact solution has two discontinuities at $t=0$. One of the discontinuities gives rise to two kinks (a rarefaction fan), the other remains as a discontinuity. At $t=2$, one of the kinks collides with a discontinuity to form a single discontinuity. Around $t=0$, the two kinks are very close to each other and are identified as a single discontinuity in the numerical solution; see \Cref{test-1 feature loc}. As time progresses, the two kinks move away from each other and are identified correctly.

Let $\mcal E(\Delta x)$ represent the maximum of the error in feature location for a grid size $\Delta x$ i.e.,
\begin{gather}
\mcal E(\Delta x) := \max_j\|z_{M,j}-z_j\|_{L^{\infty}(\tspace)}.
\end{gather}
Recall that $\Delta x = |\xMax-\xMin|/M$. \Cref{test-1 error feature} shows $\mcal E(\Delta x)$ for different grid sizes. We vary the number of spatial elements $M$ from $5\times 10^2$ to $3\times 10^3$ in steps of $2\times 10^2$. We choose the threshold $C$ in the discontinuity location identification such that $C/M$ remains constant at $2.5 \cdot 10^{-2}$.  We make the following two observations. Firstly, although not monotonically, $\mcal E(\Delta x)$ decreases with $\Delta x$. Secondly, $\mcal E(\Delta x)$ stays close to $\Delta x$ and can get smaller than $\Delta x$ as $\Delta x$ decreases. Thus, at least for the current feature location identification procedure and for the current test case, the assumption on the error in feature location made in \Cref{coro: width decay} is justified.

The dashed lines in \Cref{test-1 feature loc} show the temporal locations of the reference snapshots resulting from \Cref{algo: part time}. The algorithm provides $N = 5$ (with $N$ as given in \Cref{time partitions}) different reference snapshots located at $t=0$, $t = 0.02$, $t = 1.60$, $t=1.92$ and $t = 1.98$, respectively. The first reference snapshot is the initial data that is matched to a few subsequent snapshots, which is a result of identifying the two close-by kinks as a single discontinuity. The second reference snapshot is at a time instance when our feature identifier can distinguish between the two kinks. The third and the fourth reference snapshot is selected because the features come too close to each other, violating either the condition (C3) given in \eqref{assume naive} or the lower-bound on the minimum feature distance given in \eqref{lower bound dist}. The last reference snapshot is selected after the kink collides with the discontinuity, it matches to all the subsequent snapshots. Note that in the exact solution, the kink collides with the discontinuity at $t=2$. However, numerically, as mentioned in \Cref{remark: miss kinks}, we miss out on kinks that lie very close to a discontinuity therefore, already at $t=1.98$ we detect only the discontinuity and not the kink that interacts with it. 

\Cref{test-1 E} compares $\Xi_{m}(\snapMat_i)$ to $\Xi_{m}(\snapMatCalibSub{i})$ and shows that, for all values of $i$ and $m$, $\Xi_{m}(\snapMatCalibSub{i})$ is smaller than $\Xi_{m}(\snapMat_{i})$. Since $\snapMat_1$ contains only four snapshots, the value of $\Xi_{m}(\snapMatCalibSub{1})$ does not significantly differ from $\Xi_{m}(\snapMat_{1})$. For all the other sub-matrices, the value of $\Xi_{m}(\snapMatCalibSub{i})$, already for $m = 1$, is at least $10^{-4}$ times smaller than $\Xi_{m}(\snapMat_{i})$. Let us emphasize that $m=1$ is just $0.05\%$ of $M$ (the dimensionality of the FOM).

 For $i=4,5$, as $m$ is increased, $\Xi_{m}(\snapMatCalibSub{i})$ stagnates. Varying the value of $M$ from $10^3$ to $3\times 10^3$ in steps of $2\times 10^2$ showed that the stagnation value is $\mcal O(M^{-0.8})$, which is $\mcal O(M^{-0.3})$ times better than (the $M$-dependent part of) the bound on the $m$-width developed in \eqref{width decay rate}. A possible reason for this stagnation could be the error in feature location.
 
 For $i=2$, the matrix $\snapMat_i$ contains snapshots that are either rarefaction fans or constants between any two features, thus satisfying the condition in \eqref{cond trivial}.
 This results in the calibrated manifold consisting of a single function. Ideally, the calibrated snapshot matrix should have a rank close to one and for $m=1$, $\Xi_{m}(\snapMatCalibSub{i })$ should be (very) close to zero. However, as \Cref{test-1 E} depicts, because of the error in feature location, this ideal situation is not realized in practice and the value $\Xi_{m}(\snapMatCalibSub{i})$ is far away from zero. Nevertheless, for $m=13$, $\Xi_{m}(\snapMatCalibSub{i})$ reaches (machine precision) zero. We attribute this convergence to the fact that the error in identifying a feature location is $\mcal O(M^{-1})$ and that the calibrated manifold $\calibMani{.}{\tspace_i}$ consists of a single function. Observance of a similar behaviour in other experiments corroborates our claim.
 
 \subsubsection{Discontinuity matching} We repeat the above experiment but with only discontinuity matching. With $\snapMatCalib^D$ we represent the resulting calibrated snapshot matrix. \Cref{algo: part time} generates two reference snapshots i.e., $N=2$. The temporal location of these two reference snapshots are shown in \Cref{shock loc}. Both the reference snapshots are close to $t=0$. The first reference is the initial data and is matched to a few subsequent snapshots. The second reference snapshot is at a time-instance when we can uniquely identify the two kinks, leaving us with a single discontinuity.
 
 \Cref{shock kink comp} compares $\Xi_m(\snapMatCalibSub{i})$ to $\Xi_m(\snapMatCalibSub{i}^D)$. For $i=1$, both $\Xi_m(\snapMatCalibSub{i})$ and $\Xi_m(\snapMatCalibSub{i}^D)$ have the same values. This is as expected, since the two close-by kinks are identified as a discontinuity. For $i >1$ and for all $m\in [1,20]$, $\Xi_m(\snapMatCalibSub{i})$ is at least two orders of magnitude smaller than $\Xi_m(\snapMatCalibSub{i}^D)$. The difference is more prominent for smaller values of $m$. Already for $m=1$, $\Xi_m(\snapMatCalibSub{i})$ is four order of magnitude smaller than $\Xi_m(\snapMatCalibSub{i}^D)$. The experiment clearly establishes the benefit of including both kinks and discontinuities in the feature set.

\begin{figure}[ht!]
\centering
\subfloat []{
\includegraphics[width=2.4in]{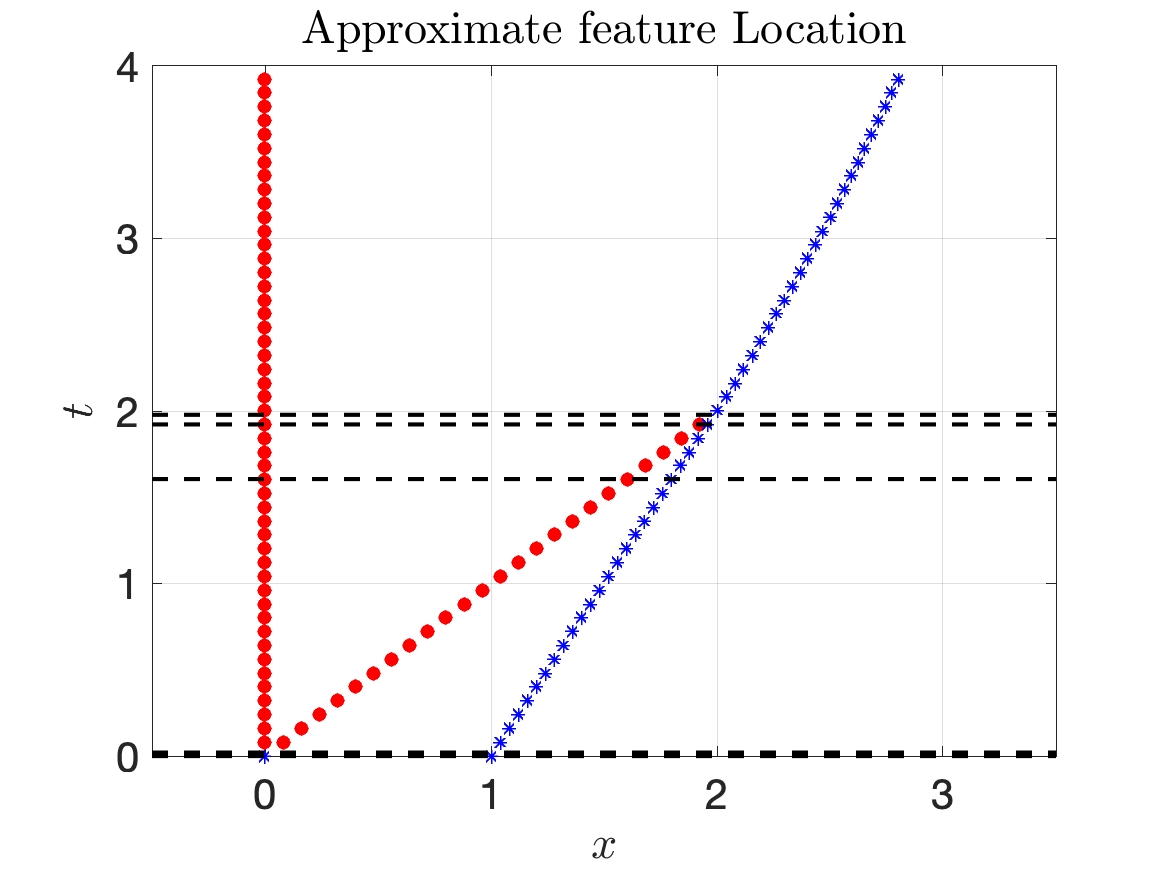} 
\label{test-1 feature loc}}
\subfloat []{
\includegraphics[width=2.4in]{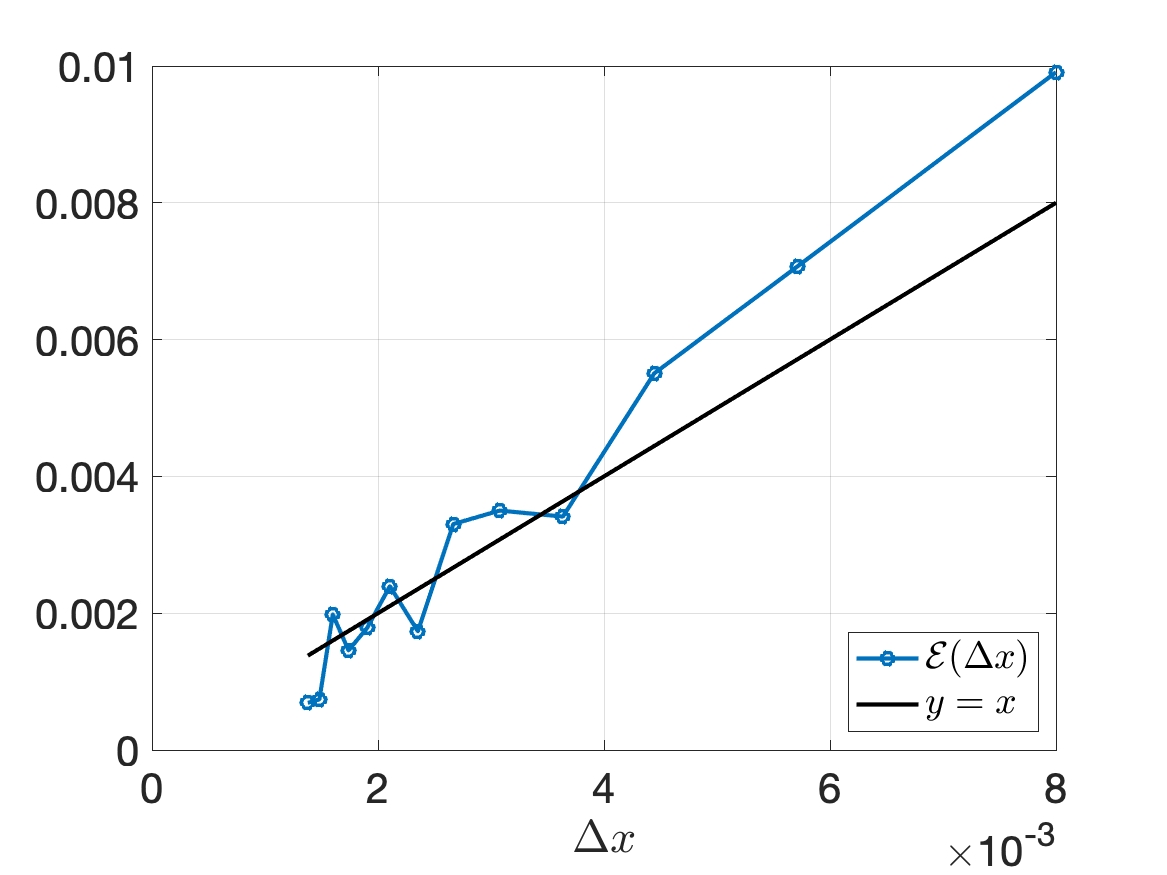}
\label{test-1 error feature}}
\hfill
\subfloat []{
\includegraphics[width=2.4in]{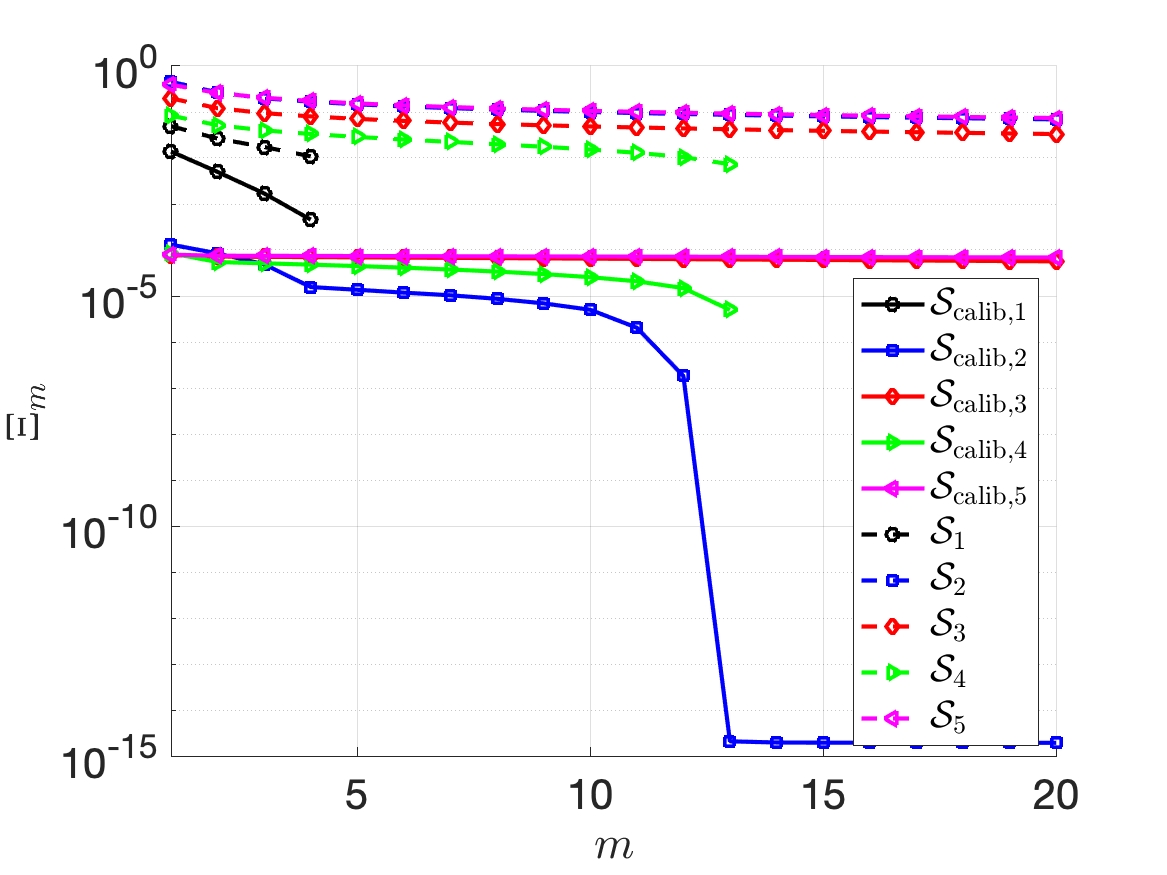}
\label{test-1 E}}
\caption{\textit{Results for test case-1. Both kinks and discontinuities included in the feature set. (a) Time-trajectory of the different features. Kink and discontinuity locations shown in red and blue, respectively. The dashed black lines show the temporal locations of the reference snapshots. (b) Error in feature location for different $\Delta x$. (c) Comparison of $\Xi_m(\snapMat_{i})$ to $\Xi_m(\snapMatCalibSub{i})$. The y-axis of (c) is on a log-scale.}}	
\end{figure} 

\begin{figure}[ht!]
\centering
\subfloat []{
\includegraphics[width=2.4in]{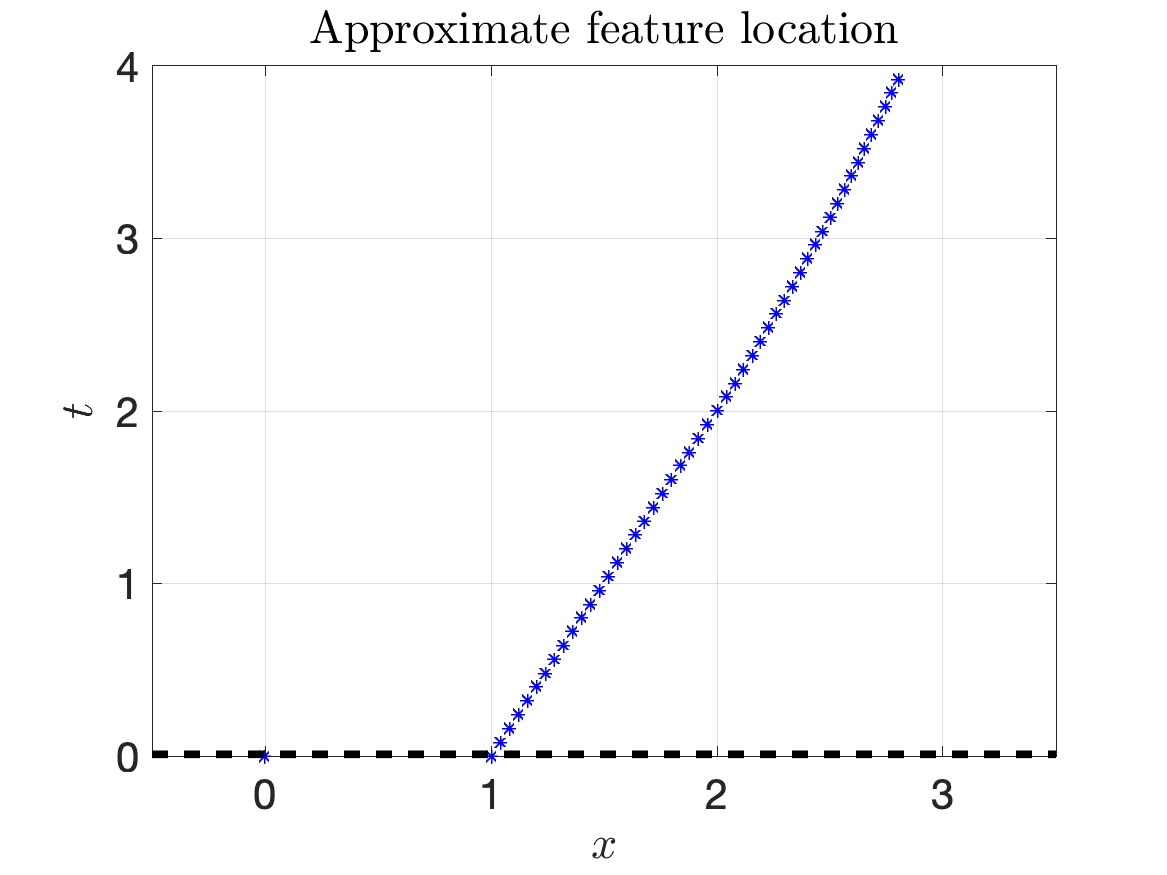} 
\label{shock loc}
}
\hfill
\subfloat []{
\includegraphics[width=2.4in]{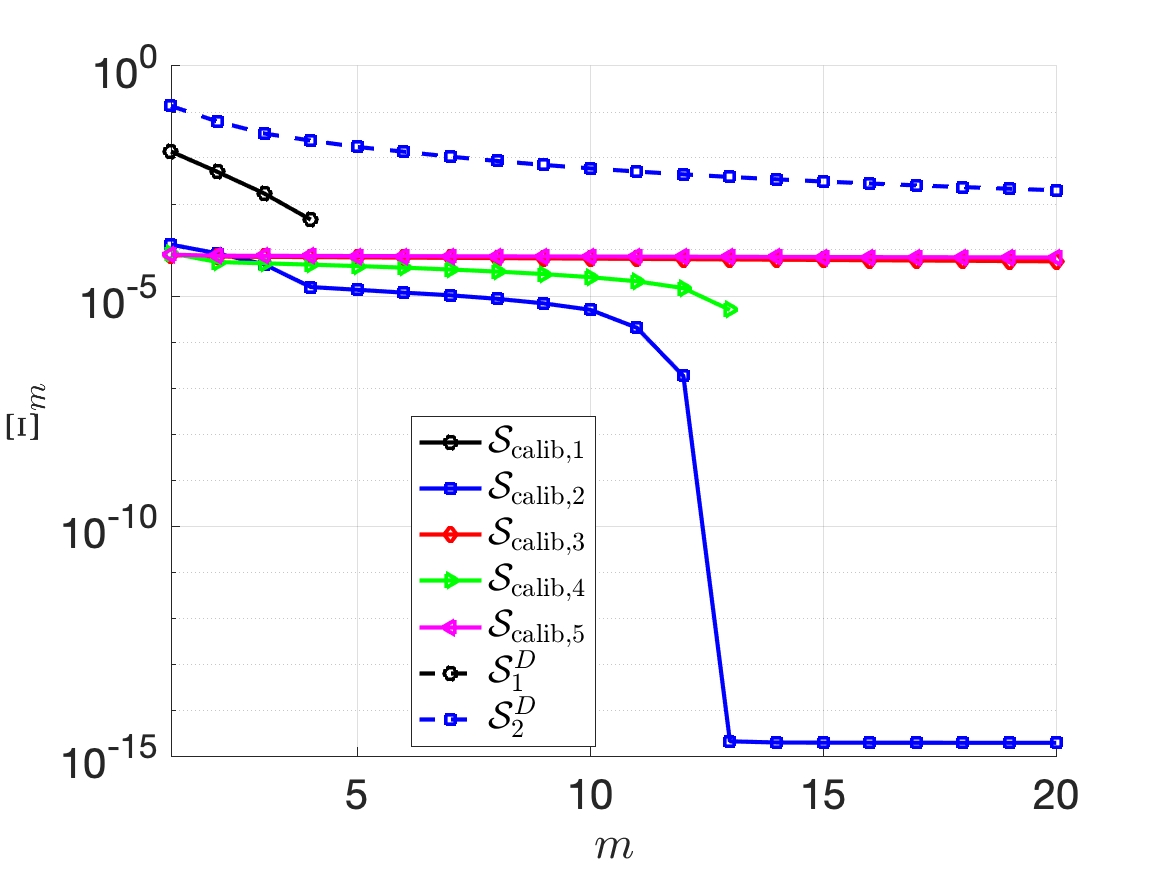}
\label{shock kink comp}}
\hfill
\caption{\textit{Results for test case-1. Only discontinuities included in the feature set. (a) Time-trajectory of the different features. Kink and discontinuity locations shown in red and blue, respectively. The dashed black lines show the temporal locations of the reference snapshots. (c) Comparison of $\Xi_m(\snapMatCalibSub{i})$ to $\Xi_m(\snapMatCalibSub{i}^D)$. The y-axis of (b) is on a log-scale.}}	
\end{figure}

\subsection{Test case-2}
With the help of the Riemann invariants, for all $(x,t)\in\Omega\times\tspace$, one can conclude that the exact solution to the wave equation \eqref{wave equation} is given as 
    \begin{gather}
   u_1(x,t) = w_1(x-t) + w_2(x+t),\hspB  u_2(x,t) = -w_1(x-t) + w_2(x+t).
    \end{gather}
The functions $w_1$ and  $w_2$ are as given in \eqref{def: sin bumps}. 
Both $u_1$ and $u_2$ contain two discontinuities, which interact at four different time instances. For $u_1$, the time-trajectory of the different discontinuities is shown in \Cref{test-2 feature loc}. The algorithm accurately identifies the four discontinuities. 

We discuss the results for $u_1$, similar results were observed for $u_2$. \Cref{algo: part time} generates $N=18$ different reference snapshots. The temporal locations of these snapshots are shown in \Cref{test-2 feature loc}. Similar to the previous test case, the reference snapshot changes frequently when features come close, or interact, with each other. To study $\Xi_m$, for the simplicity of exposition, out of the 18 different subsets $\{[t]_i\}_{i=1,\dots,18}$, we select the first four with the largest number of snapshots. These four subsets lie inside $(0,0.5)$, $(0.5,1)$, $(1,1.5)$ and $(1.5,2)$, respectively, which are also the time-intervals with no feature interaction. 

For these four subsets, \Cref{test-2 E14} and \Cref{test-2 E23} compare $\Xi_m(\snapMat_i)$ to $\Xi_m(\snapMatCalibSub{i})$. Already for $m=1$, the value of $\Xi_m(\snapMatCalibSub{1/18})$ is $\approx 10^{-5}$ and is machine-precision zero for $m=3$. For the same value of $m$, the value of $\Xi_m(\snapMat_{1/18})$ is $\approx 1$. The value of $\Xi_m(\snapMatCalibSub{7/12})$ behaves differently. For $m=4$ and larger, it does not appear to converge to zero and stagnates at $\approx 10^{-4}$. For the same value of $m = 4$, the value of $\Xi_m(\snapMat_{7/12})$ is $\approx 10^{-1}$. This is $10^3$ times larger than the value of $\Xi_m(\snapMatCalibSub{7/12})$. 

Note that $\snapMat_{1/18}$ contains snapshots that have two sin-bumps that do not interact with each other and have a constant speed of one. One can conclude that this results in the calibrated manifold $\calibMani{}{\tspace_{1/18}}$ consisting of a single function. \Cref{test-2 S14} shows the snapshots in $\snapMatCalibSub{1}$. The snapshots change (very) little over time, with no change being visible. In contrast, as depicted by \Cref{test-2 S23}, the snapshots in $\snapMatCalibSub{7}$ change substantially over time. This could explain the superior calibration of $\snapMat_{1/18}$ as compared to $\snapMat_{7/12}$.

\begin{figure}[ht!]
\centering
\subfloat []{
\includegraphics[width=2.4in]{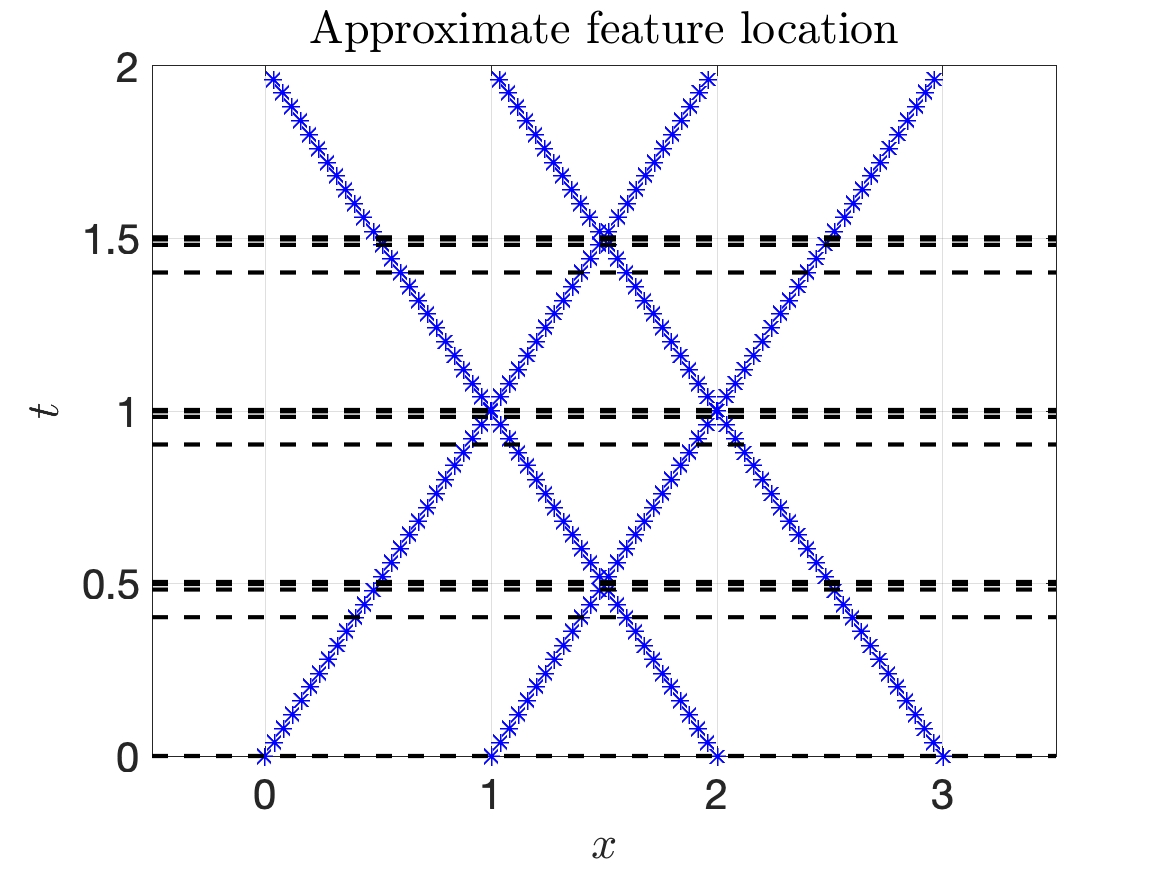} 
\label{test-2 feature loc}
}
\hfill
\subfloat []{
\includegraphics[width=2.4in]{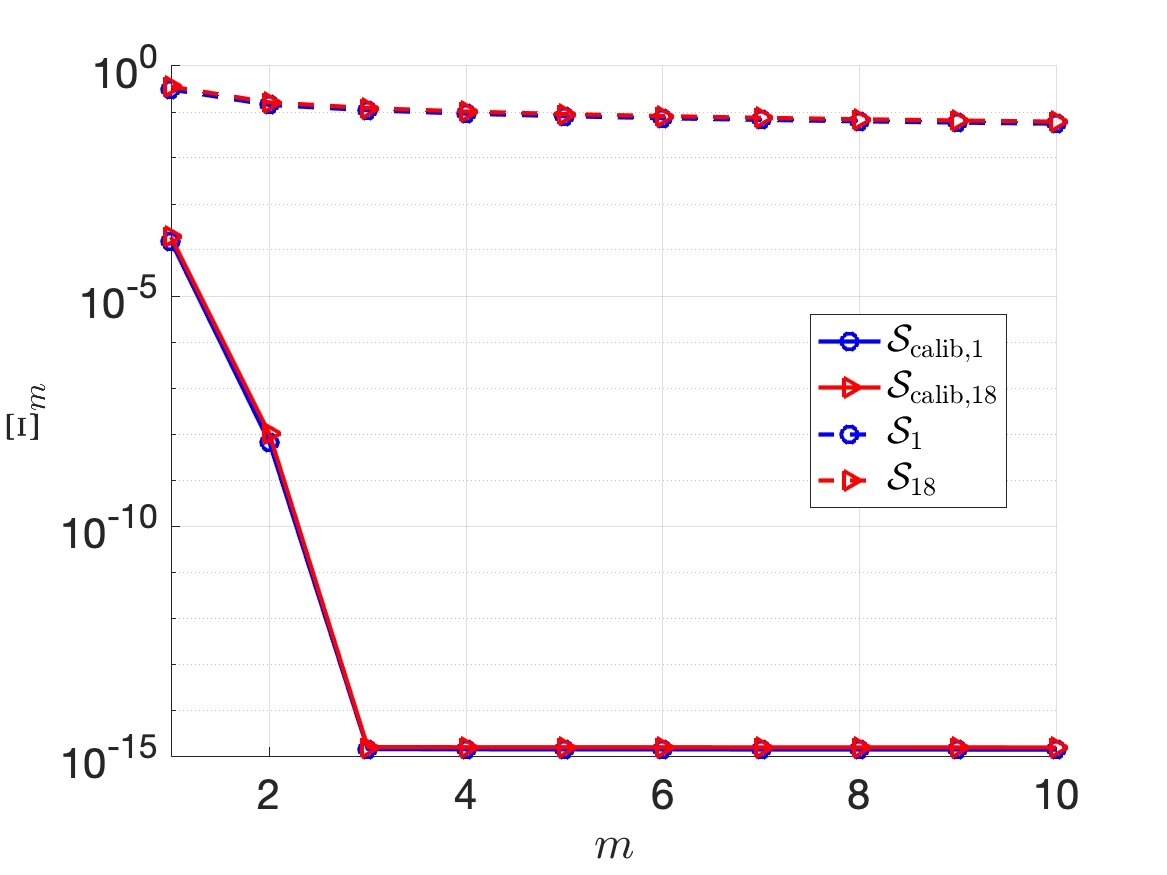} 
\label{test-2 E14}
}
\hfill
\subfloat []{
\includegraphics[width=2.4in]{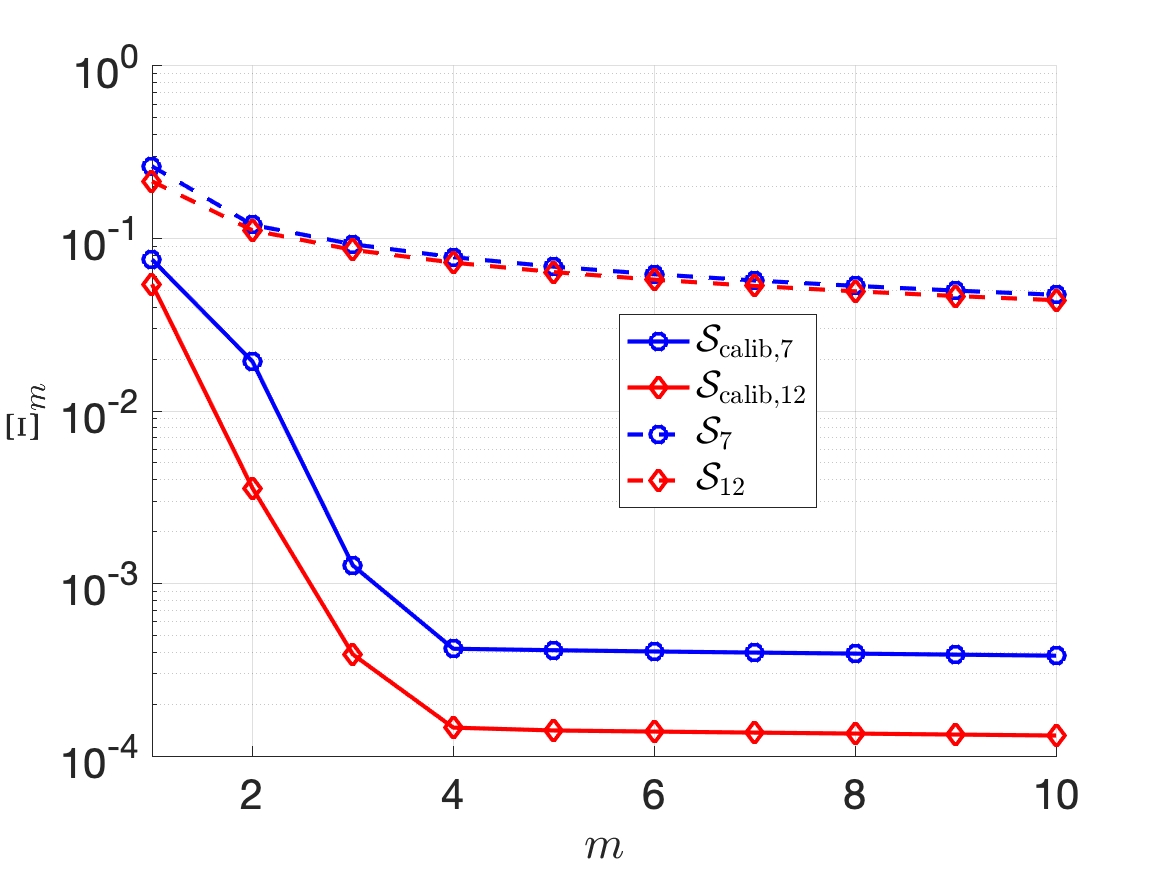} 
\label{test-2 E23}
}
\caption{\textit{Results for test case-2. (a) Time-trajectory of the approximate feature locations. Kink and discontinuity locations shown in red and blue, respectively. The dashed black lines show the temporal locations of the reference snapshots. (b) Compares $\Xi_m(\snapMat_{1/18})$ to $\Xi_m(\snapMatCalibSub{1/18})$. (c) Compares $\Xi_m(\snapMat_{7/12})$ to $\Xi_m(\snapMatCalibSub{7/12})$. The y-axis of (b) and (c) is on a log-scale.}}	
\end{figure} 

\begin{figure}[ht!]
\centering
\subfloat []{
\includegraphics[width=2.4in]{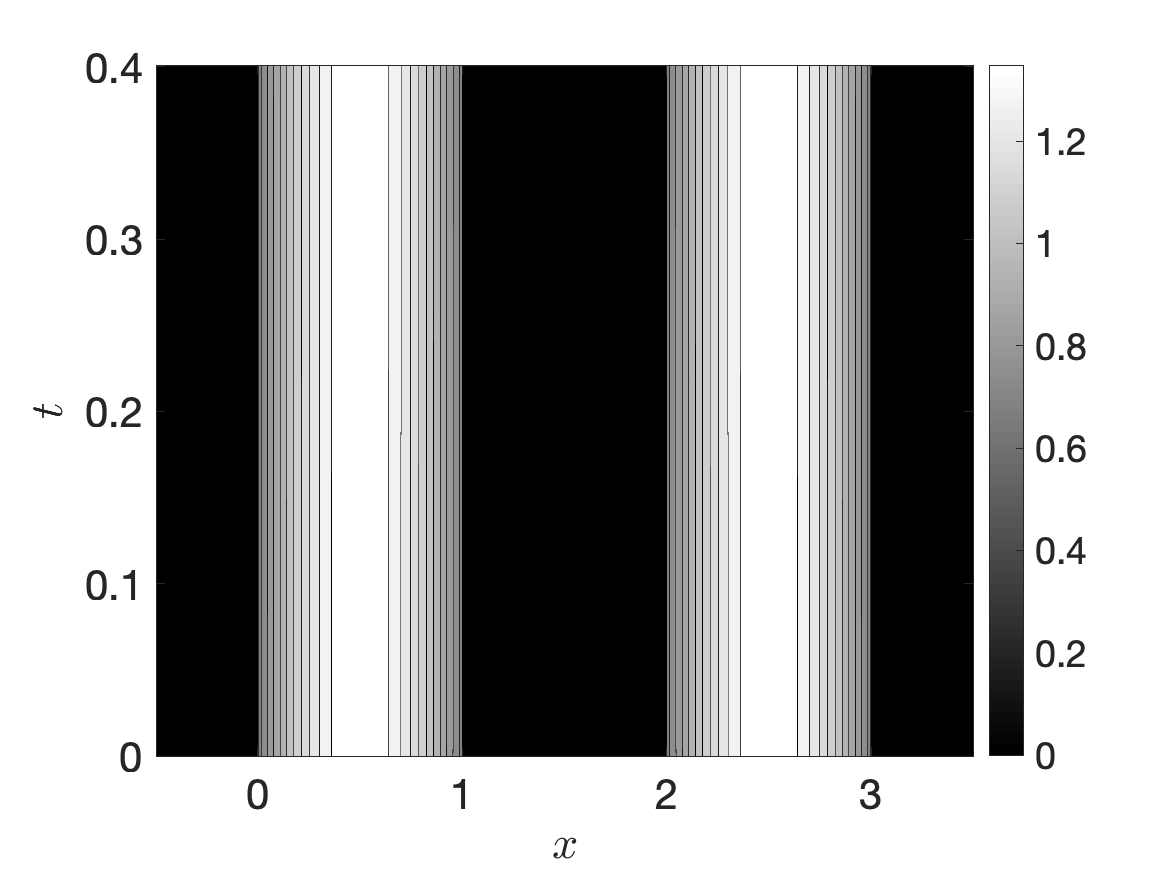} 
\label{test-2 S14}
}
\hfill
\subfloat []{
\includegraphics[width=2.4in]{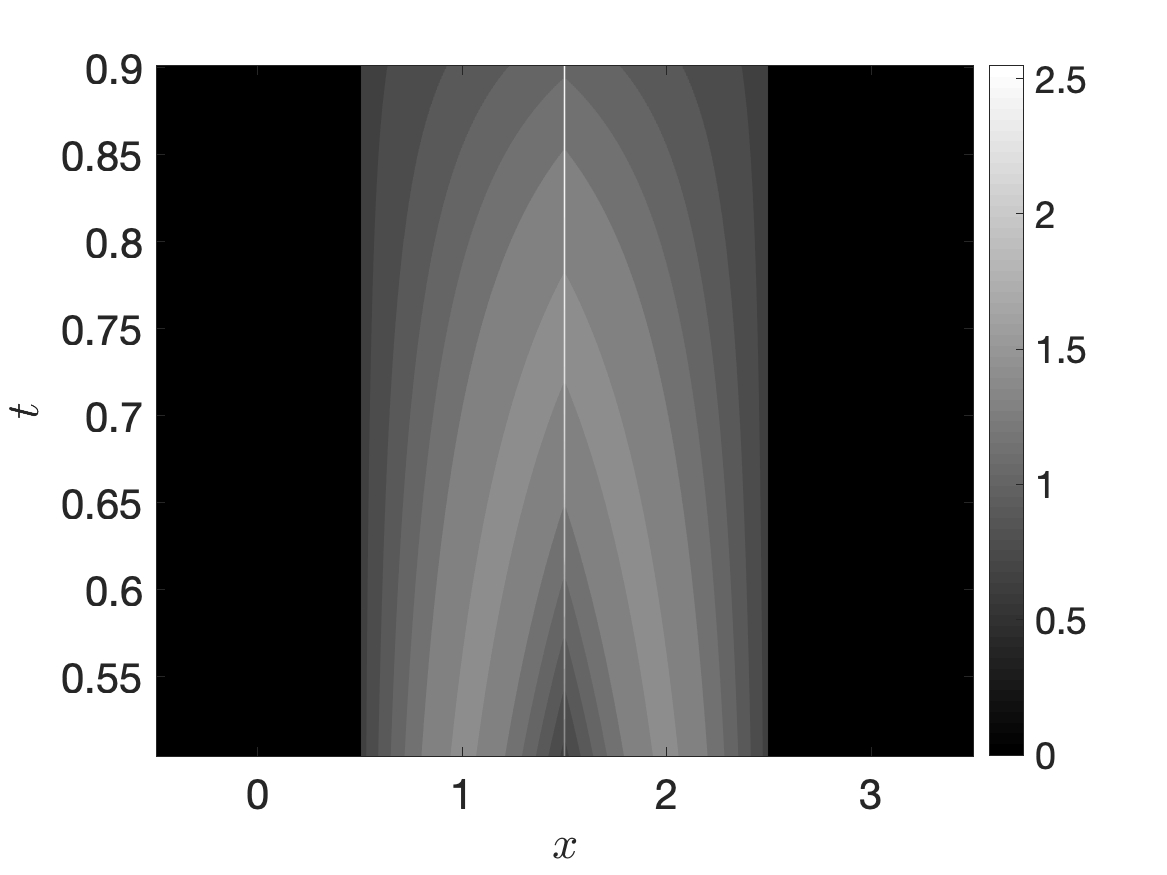} 
\label{test-2 S23}
}
\hfill
\caption{\textit{Results for test case-2. (a) and (b) show the snapshots in $\snapMatCalibSub{1}$ and $\snapMatCalibSub{7}$, respectively.}}	
\end{figure}

\subsection{Test case-3}
An exact solution to the Sod's shock tube problem can be found in \cite{ToroBook}. For brevity, we do not repeat the exact solution here. We present the results for velocity ($v$) and density ($\rho$). The results for pressure ($P$) are similar to that for density ($\rho$) and are not discussed for brevity. 

\subsubsection{Results for density ($\rho$)}
The initial data has a single discontinuity that splits into a rarefaction fan with two kinks and two discontinuities; see \Cref{test-3 exact feature rho}. The approximate feature trajectories are shown in \Cref{test-3 feature rho}. Around $t=0$, the kinks are too close to each other and are identified as a single discontinuity. For $t\in (0.01,0.1)$, because of a large slope inside the rarefaction fan, the algorithm is unable to distinguish between the two kinks and identifies the midpoint of the two kinks as the kink location.  Only after $t=0.1$, the spread of the rarefaction fan allows for an accurate identification of the two kinks.

\Cref{algo: part time} generates $N=11$ different reference snapshots, the location of which are shown in \Cref{test-3 feature rho}. Because the features are too close to each other, the reference snapshot changes frequently close to $t=0$. Around $t=0.1$, the two kinks are identified correctly and the algorithm generates an additional reference snapshot.

To study $\Xi_m$, out of $\{[t]_i\}_{i=1,\dots,N}$, we select the two largest subsets. These two subsets lie inside $(0.02,0.1)$ and $(0.1,0.2)$, respectively. \Cref{test-3 E rho} compares $\Xi_m(\snapMat_{8/9})$ to $\Xi_m(\snapMatCalibSub{8/9})$. For both $i =8$ and $i=9$, $\Xi_m(\snapMatCalibSub{i})$ decays much faster than $\Xi_m(\snapMat_{i})$. For $m=10$, which is $0.5\%$ of $M$, calibration provides at least one order-of-magnitude improvement, with the results for $i=9$ being better than those for $i=8$. Precisely, 
\begin{gather}
\Xi_{10}(\snapMatCalibSub{8})\approx 5\times 10^{-2} \times \Xi_{10}(\snapMat_{8}),\hspB \Xi_{10}(\snapMatCalibSub{9})\approx  1\times 10^{-2} \times\Xi_{10}(\snapMat_{9}).
\end{gather}
As $m$ increases, the difference between $\Xi_m(\snapMatCalibSub{i})$ and $\Xi_m(\snapMat_{i})$ becomes larger. For $m=50$, which is $2.5\%$ of $M$, we find an improvement of at least two orders of magnitude
\begin{gather}
\Xi_{50}(\snapMatCalibSub{8})\approx 10^{-2} \times\Xi_{50}(\snapMat_{8}),\hspB \Xi_{50}(\snapMatCalibSub{9})\approx 7\times 10^{-3} \times\Xi_{50}(\snapMat_{9}).
\end{gather}

\begin{figure}[ht!]
\centering
\subfloat []{
\includegraphics[width=2.4in]{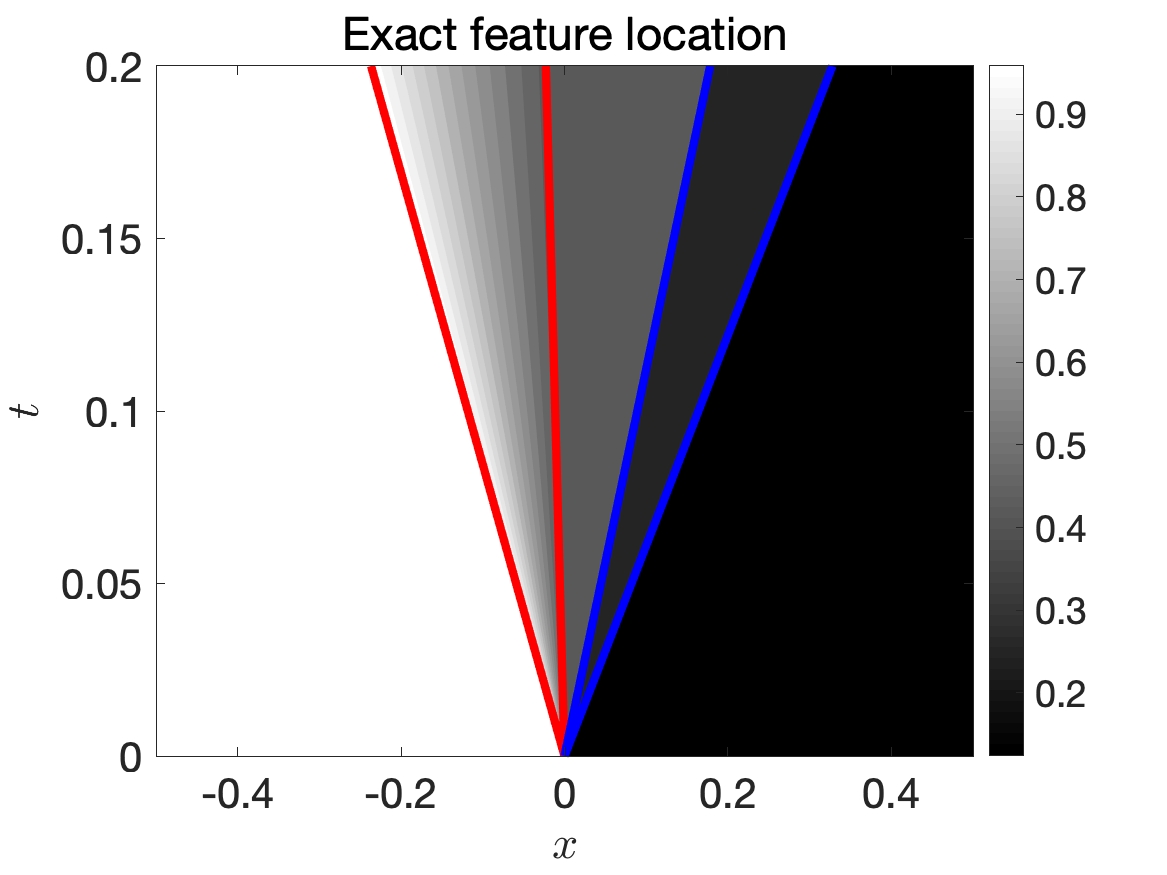} 
\label{test-3 exact feature rho}
}
\hfill
\subfloat []{
\includegraphics[width=2.4in]{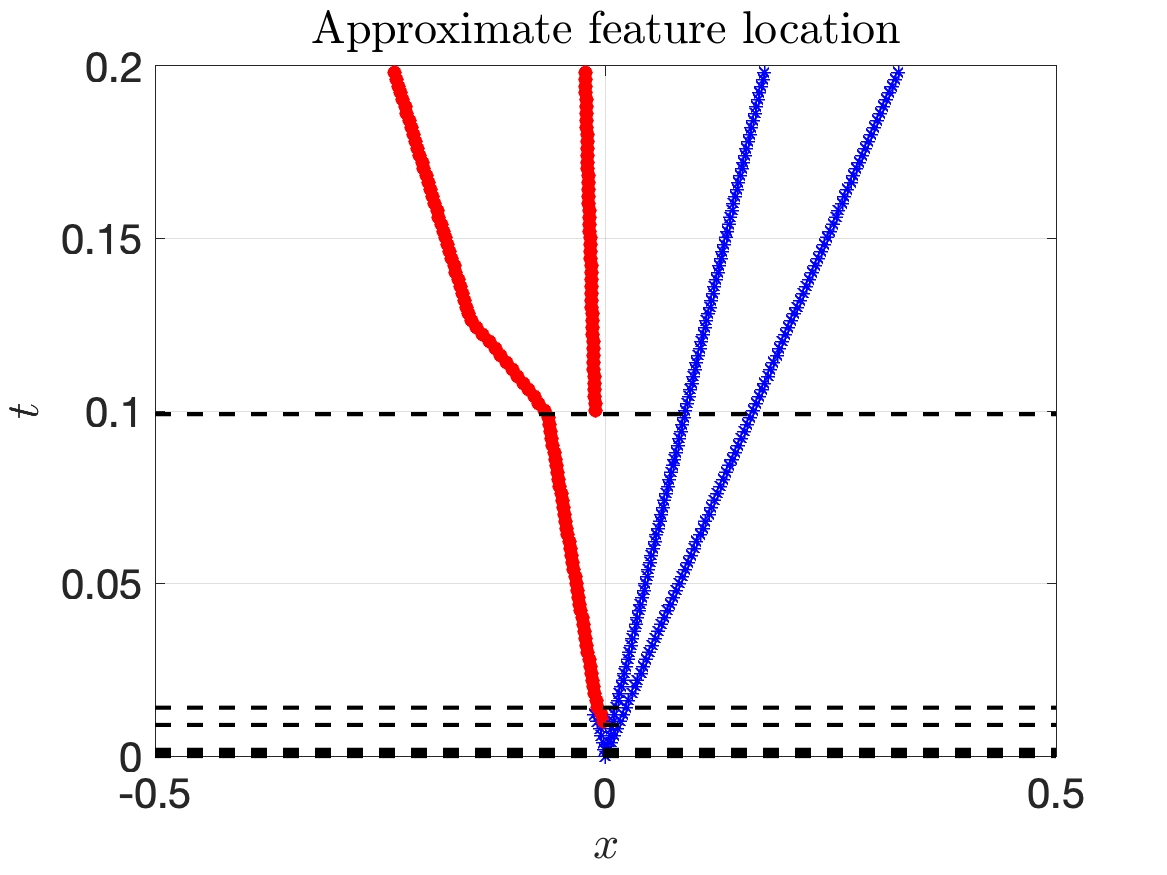} 
\label{test-3 feature rho}
}
\hfill
\subfloat []{
\includegraphics[width=2.4in]{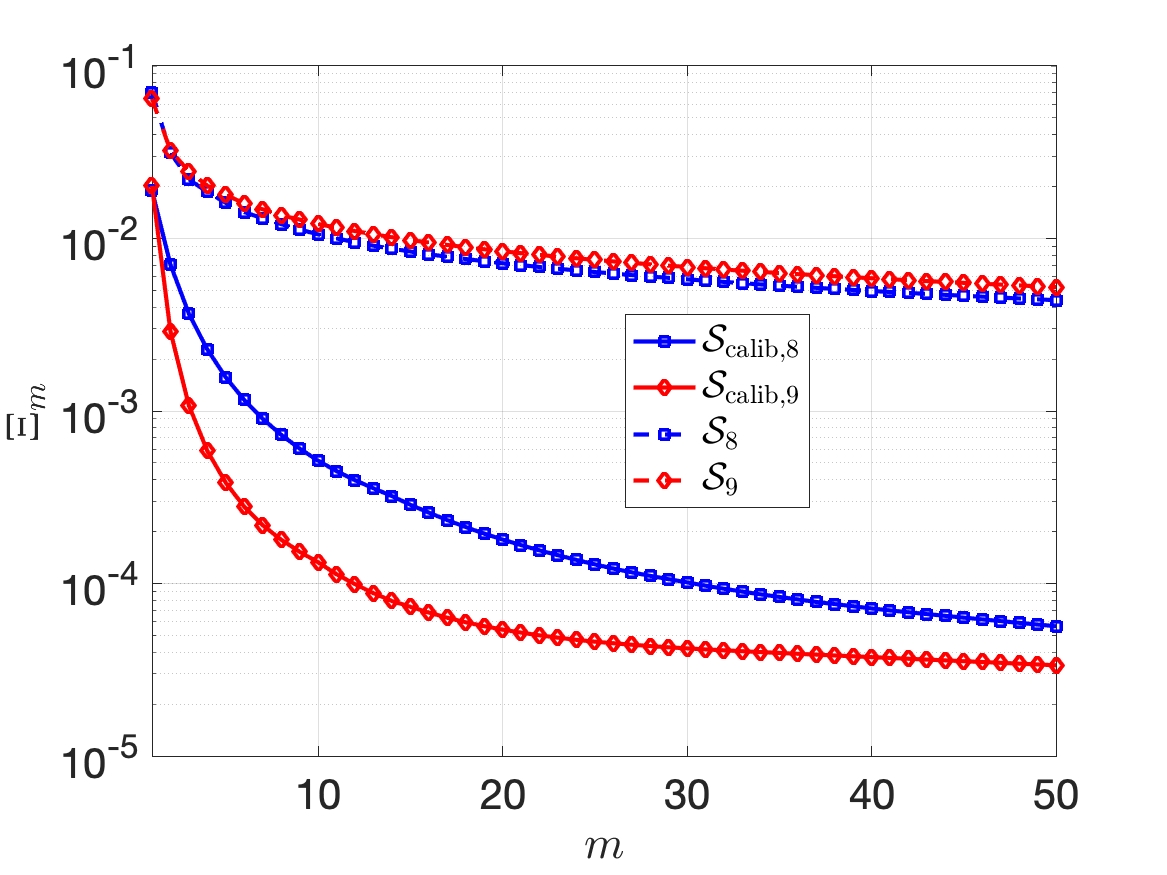} 
\label{test-3 E rho}
}
\hfill
\caption{\textit{Test case-3: results for $\rho$. (a) and (b) show the exact and the approximate feature trajectory, respectively. Kinks are shown in red and the discontinuities in blue. Dashed lines in (b) show the temporal locations of the reference solutions. (c) Compares $\Xi_m(\snapMat_{8/9})$ to $\Xi_m(\snapMatCalibSub{8/9})$. The y-axis in (c) is on a log-scale. }}
\end{figure}

\subsubsection{Results for velocity ($v$)}
Apart from $t=0$, $v(\cdot,t)$ has two kinks and a discontinuity. Similar to test case-1, the two kinks are identified once they have moved sufficiently far away from each other, otherwise they are identified as a single discontinuity. The discontinuity is identified accurately at all time instances; see \Cref{test-3 feature loc u}. 

\Cref{algo: part time} generates $N=5$ different reference snapshots. Most of these reference snapshots are close to $t=0$. The time interval $(0.01,0.2)$ is the largest subset of $\tspace$ where the reference snapshot does not change. For this time-interval, in \Cref{test-3 E u} we compare $\Xi_m(\snapMat_{i})$ to $\Xi_m(\snapMatCalibSub{i})$. Already for $m=1$, we find that  $\Xi_m(\snapMatCalibSub{5})\approx 10^{-3}$, which is two orders of magnitude smaller than $\Xi_m(\snapMat_{5})$.
For $m=30$, which is $1.5\%$ of $M$, $\Xi_{30}(\snapMatCalibSub{5})$ is (machine precision) zero, whereas $\Xi_{30}(\snapMat_{5})$ is $6.4\times 10^{-2}$.

\begin{figure}[ht!]
\centering
\subfloat []{
\includegraphics[width=2.4in]{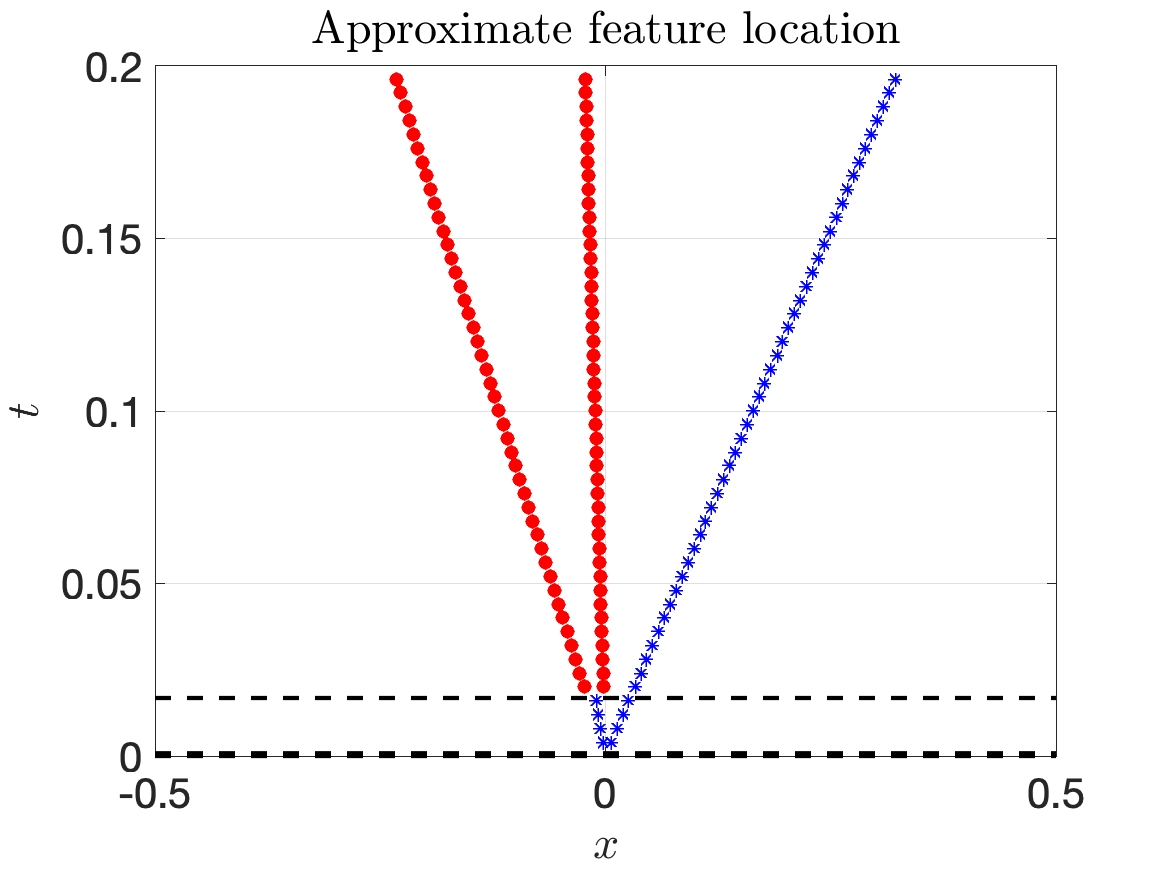} 
\label{test-3 feature loc u}
}
\hfill
\subfloat []{
\includegraphics[width=2.4in]{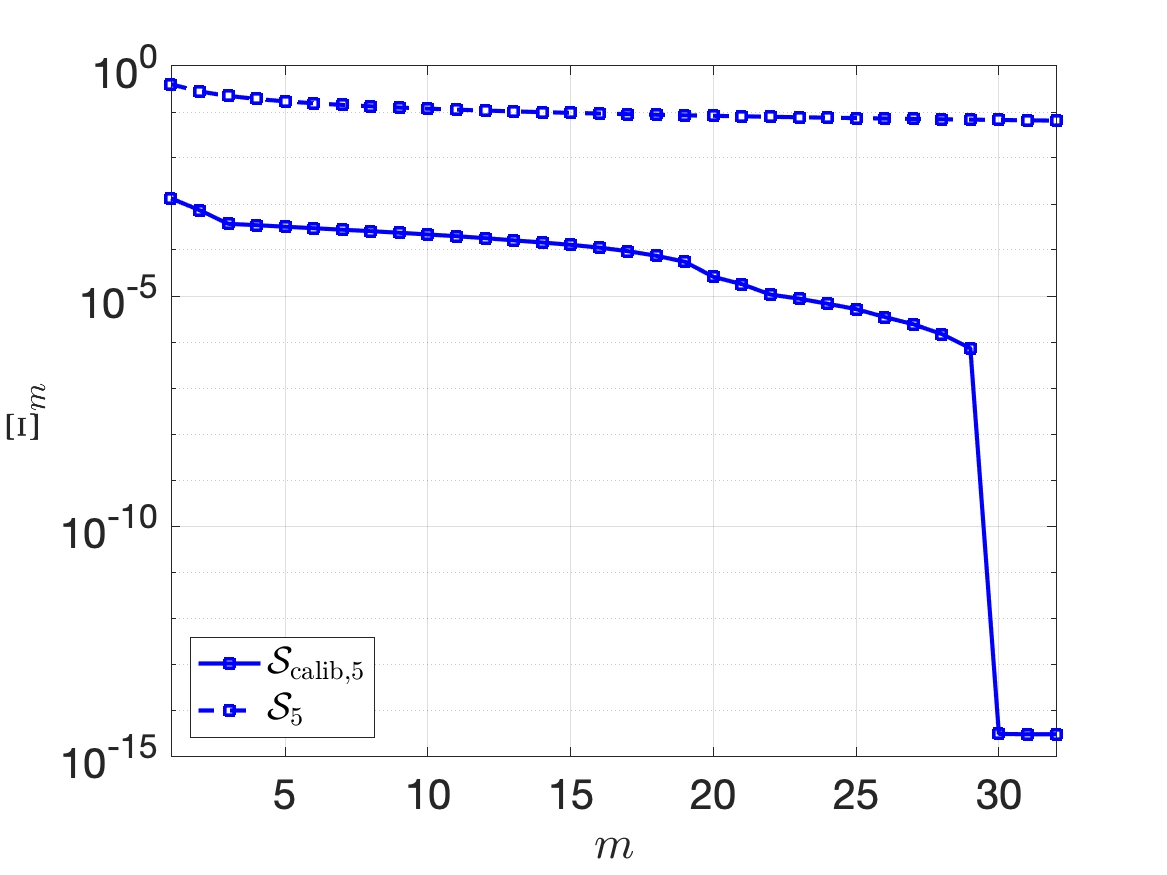} 
\label{test-3 E u}
}
\hfill
\caption{\textit{Test case-3: results for the velocity $v$. (a) Approximate feature location. (b) Compares $\Xi_m(\snapMat_5)$ to $\Xi_m(\snapMatCalibSub{5})$. The y-axis in (b) is on a log-scale. }}	
\end{figure}

\subsection{Test case-4}
An exact solution to \eqref{advection} is given as 
\begin{equation}
\begin{aligned}
 u(x,t) = &\mathbbm{1}_{[0.1,0.5]}\left(t-\frac{x-\xMin}{\beta}\right),\hspB\forall x\in (0,\xMin+\beta t],t\in\tspace,\\
u(x,t) = &(\sin(\pi(x-\beta t)) + 1)\mathbbm{1}_{[0,1]}(x-\beta t),\hspB\forall x\in (\xMin + \beta t,\xMax),t\in\tspace.\\
\end{aligned}
\end{equation}
For $t\in [0,0.1)$, the solution contains two discontinuities that move to the right. At $t=0.1$ and $t=0.5$, two additional discontinuities enter from the left boundary. \Cref{test-4 feature loc} shows the approximate location of these discontinuities. \Cref{algo: part time} generates $N=11$ different reference snapshots. The reference snapshot changes when a new discontinuity enters from the boundary. 

\Cref{test-4 E} compares $\Xi_m(\snapMat_{i})$ to $\Xi_m(\snapMatCalibSub{i})$ for the three largest subsets $[t]_i$. Clearly, $\Xi_m(\snapMatCalibSub{i})$ decays much faster than $\Xi_m(\snapMat_{i})$, and is zero for $m=3$. For the same value of $m$, $\Xi_m(\snapMat_{i})$ is $\approx 2\times 10^{-2}$. With the above exact solution, it is easy to check that the calibrated manifold consists of a single function, which could explain the great improvement offered by calibration.  

\begin{figure}[ht!]
\centering
\subfloat []{
\includegraphics[width=2.4in]{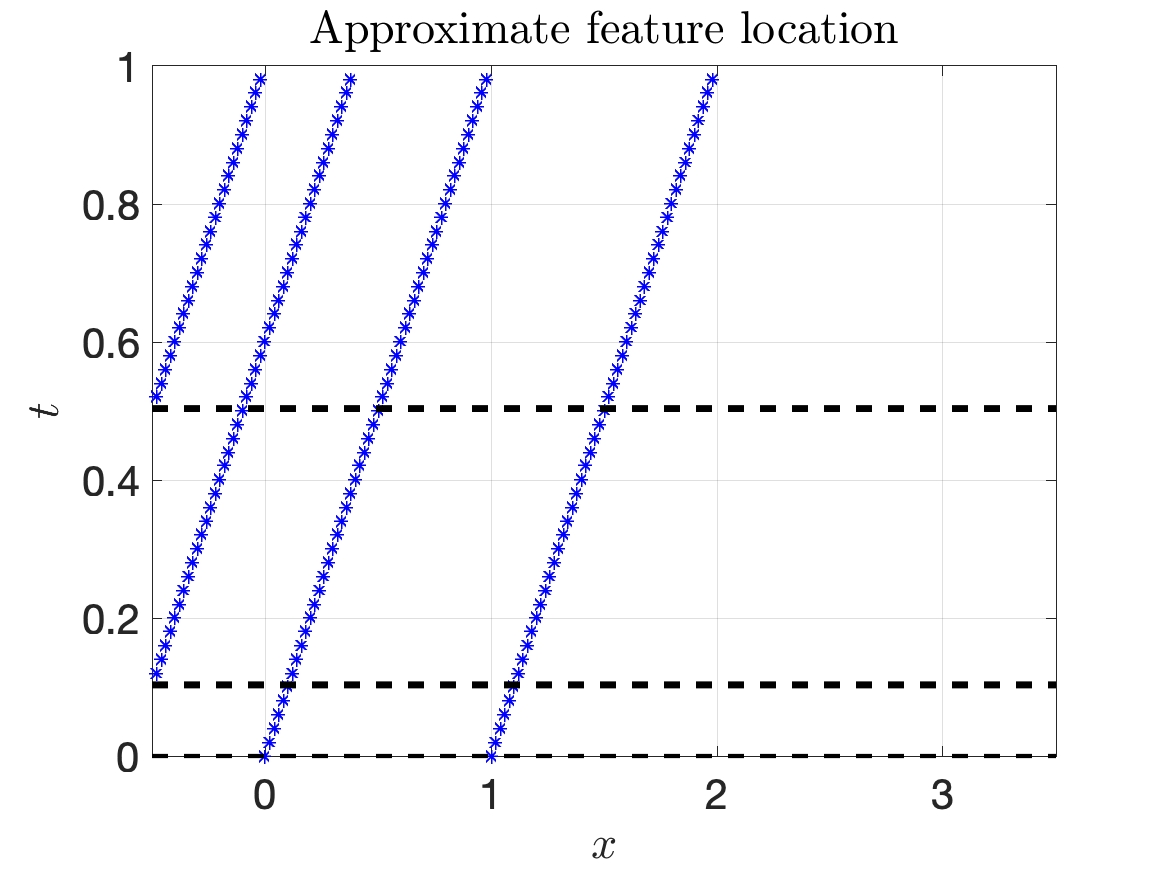} 
\label{test-4 feature loc}
}
\hfill
\subfloat []{
\includegraphics[width=2.4in]{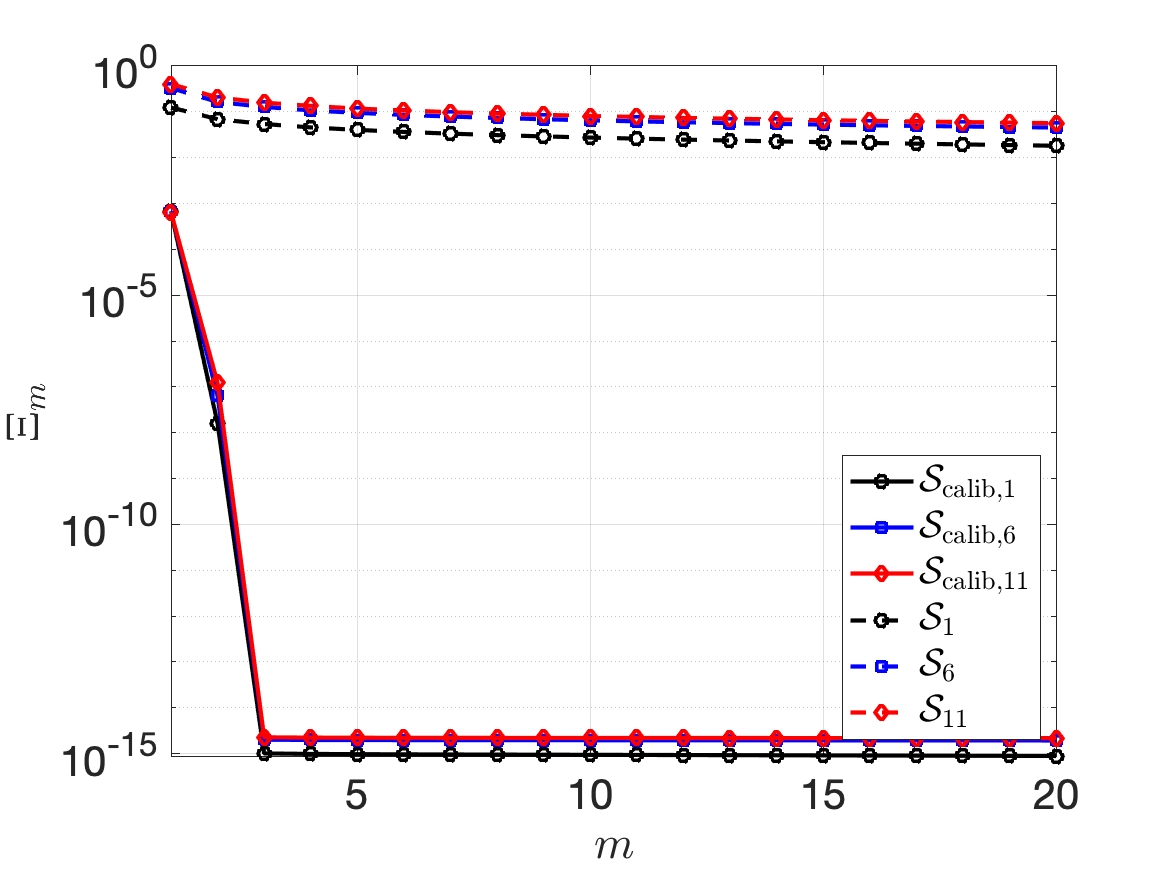} 
\label{test-4 E}
}
\hfill
\caption{\textit{Results for test case-4. (a) Approximate feature location. (b) Compares $\Xi_m(\snapMat_{1/6/11})$ to $\Xi_m(\snapMatCalibSub{1/6/11})$. The y-axis in (b) is on a log-scale. }}	
\end{figure} 

\section{Conclusions}
We have proposed an algorithm to induce a fast singular value decay in a snapshot matrix resulting from hyperbolic equations. The algorithm relies on computing the snapshots on a transformed spatial domain with the transformation computed using feature matching between a reference and the other snapshots. The choice of the reference snapshot ensures that the transformation is a homeomorphism and has a lower and an upper bound on its weak derivative---we found these two properties desirable for both the theoretical analysis and a numerical implementation. To account for feature interaction and formation (i.e., cases where shocks collide, shocks form, etc.), we have proposed an adaptive reference snapshot selection technique. With this technique, we can divide the snapshot matrix into sub-matrices with each sub-matrix containing snapshots with no feature interaction/formation. In each of the sub-matrices, we perform feature matching as usual.  

Under regularity assumptions on the initial data and the flux function, we have proven that feature matching results in a fast $m$-width decay of a so-called calibrated manifold. Our proof exploits the regularity of functions in a calibrated manifold. We have performed numerical experiments on a broad range of problems involving non-linear system of equations and time-dependent boundary conditions. Our experiments verify that feature matching is successful in inducing a fast singular value decay in a snapshot matrix. We also found that feature matching performs exceptionally well for problems where the calibrated manifold contains a single function.   

We observe that although the singular values of a calibrated snapshot matrix decay fast, they can stagnate at a value that scales with the spatial grid resolution. The stagnation is a by-product of computing the spatial transform using a numerical approximation of the exact solution and indicates that, for hyperbolic problems, not much is gained by increasing the dimension of the reduced-order model beyond a certain limit.

\bibliographystyle{abbrv}
\bibliography{../papers}
\appendix

\section{Regularity of functions in $\calibMani{}{\tspace}$}\label{app: proof decay}
The definition of $X_i$ provides $X_i \in C^0(\Omega_i^D)$ by the implicit function theorem and the bound on $\beta_i$. Moreover,
\begin{equation}
\begin{gathered}
D_{t}X_i(x,\tstar) = -\frac{f'(u_0(X_i(x,\tstar)))}{\beta(X_i(x,\tstar),\tstar)} =:\hat{\mcal G}(X_i(t,x),t),\\ D_{x}X_i(x,\tstar) = \frac{1}{\beta(X_i(x,\tstar),\tstar)}=:\tilde{ \mcal G}(X_i(t,x),t). \label{der Xi}
\end{gathered}
\end{equation}
The regularity of $f$ and $u_0$ and the assumption on $\beta$ imply that $\hat{\mcal G}, \tilde {\mcal G} \in C^{\omega-1}(\Omega_i \times D)$ which implies that $X_i \in W^{\omega,\infty}(\Omega_i^D)$ by bootstrapping.


Next, we  show that $\varphi \in L^\infty(\Omega;W^{\omega,\infty}(\tspace))$. Since $\varphi(x,t)\leq \xMax$, we have $\varphi\in L^\infty(\Omega\times\tspace)$. The definition of $\varphi$ in \eqref{def spTransf} implies that $D_t^\omega\varphi\in L^\infty(\Omega\times\tspace)$ if $z\in W^{\omega,\infty}(\tspace)$. When $z$ is a kink location, following the characteristics forwards in time we find 
$z(t) = z(0) + f'(u_0(z_0(0)))\cdot t$, which provides the desired regularity. When $z$ is a shock location, we proceed as follows.

For simplicity, assume that $p_0=1$ with a shock at $z(t)$. The argument remains the same for (non-interacting) multiple shocks. Consider the weak solution
\begin{gather}
u(x,t) = \begin{cases}
\td u_0(x,t)\hsp & \text{in }\Omega^D_0\\
\td u_1(x,t)\hsp & \text{in }\Omega^D_1
\end{cases}.
\end{gather}
Above, $\Omega^D_{0/1}$ are as given in \eqref{def Omegai}. Following the characteristics forward in time, we find 
\begin{gather}
\td u_0(x,t) = u_0(X_0(x,t)),\hspB \td u_1(x,t) = u_0(X_1(x,t)).
\end{gather} 

The assumption on $\beta_i$ means that inside $\Omega_i^D$  characteristics of $u$ are bounded away from intersecting each other. Thus, $\tilde u_0, \tilde u_1$ inherit their regularity from the regularity of the initial data between the features, i.e. $\tilde u_i \in W^{\omega, \infty}(\Omega_i^D)$ and (since intersection of characteristics is not imminent), we can find $c,\epsilon >0$ such that $\td u_0$ has a  extension $\td u_0^{\opn{ex}} \in W^{\omega, \infty}(\Omega_0^{D,\opn{ex}})$ (that is constant along characteristics) with
\begin{gather}
\Omega_0^{D,\opn{ex}}:= \{(x,t)\hsp : \hsp x\leq z(t) + \min(\epsilon,ct),\hsp t\leq T\}. 
\end{gather}
 A similar definition holds for $\td u_1^{\opn{ex}}$. By the Rankine-Hugoniot condition, $z$ satisfies 
\begin{gather}
d_t z(t) = \mcal H(\td u_0^{\opn{ex}}(z(t),t),\td u_1^{\opn{ex}}(z(t),t))\hspB\text{where}\hspB \mcal H(a,b):=\begin{cases}
\frac{f(a)-f(b)}{a-b},\hsp & a\neq b\\
f'(a),\hsp & a =b \label{shock eq}
\end{cases}. 
\end{gather}
Since $f\in C^{\omega + 1}$ we have $\mcal H\in C^{\omega}(\mbb R^2)$ implying that $z$ satisfies $d_t z(t) = h(z(t),t)$ with $h = \mcal H(\td u_0^{\opn{ex}},\td u_1^{\opn{ex}})$ and $h\in C^{\hcoeff-1}\left(\left(\Omega^{D,\opn{ex}}_0\cap \Omega^{D,\opn{ex}}_1\right)\times\tspace\right)$. Since $\Omega^{D,\opn{ex}}_0\cap \Omega^{D,\opn{ex}}_1$ is compact and $\td u_i^{\opn{ex}}$ is Lipschitz, $h$ is globally Lipschitz continuous providing a global solution to \eqref{shock eq}. Furthermore, since $h\in C^{\hcoeff-1}$, $z\in C^{\hcoeff}(\tspace)$. Since $\tspace$ is closed, we have 
$z\in W^{\hcoeff,\infty}(\tspace)$ and thus $\varphi \in L^\infty(\Omega,W^{1,\infty}(\tspace))$.

Using \eqref{trace back} the regularity of $g$ is a direct consequence of the regularity of $u_0, X_i,$ and $\varphi$

\section{Rarefaction fan} \label{app: rarefaction}
Let $X_i(x,\tstar)$ be as given in \eqref{trace back}. We show that the second condition in \eqref{cond trivial} can be satisfied if $\Omega_i$ contains a rarefaction fan. Let $\Omega = (-1,2)$ and let $\tspace = [0,0.5]$ and consider the initial data
\begin{gather}
u_0(x):=\begin{cases}
\left(f'\right)^{-1}\left(0\right),& \hsp x\leq 0\\
\left(f'\right)^{-1}\left(x\right), &\hsp x\in (0,1)\\
\left(f'\right)^{-1}\left(1\right),&\hsp x\in [1,2)
\end{cases}. 
\end{gather}
With the above initial data, the solution reads 
\begin{gather}
u(x,t):=\begin{cases}
0,& \hsp x\leq 0\\
\left(f'\right)^{-1}\left(\frac{x}{t+1}\right), &\hsp x\in (0,1+t)\\
\left(f'\right)^{-1}\left(1\right),&\hsp x\in [1+t,2)
\end{cases}. 
\end{gather}
Assume that for all $t\in \tspace$, $u(\cdot,t)$ has a kink at both $x = 0$ and $x = 1 + t$. Thus, we have two features. The kink locations are given as 
\begin{gather}
z_1(t) = 0,\hspB z_2(t) = 1 + t.
\end{gather}
Using the above relation, for $x\in \Omega_2 = (z_1(t),z_2(t))$, the spatial transform reads
\begin{gather}
\varphi(x,\tstar) = x\left(1 + t\right).
\end{gather}
For $i=2$ and for all $x\in\Omega_2$, the definition of $X_i$ in \eqref{trace back}, the expression for $u_0$, and the above expression for $\varphi$ provides
\begin{gather}
X_2(x,t) + \tstar X_2(x,t) = x\hsp\Rightarrow\hsp X_2(x,\tstar) = \frac{x}{1+t}. 
\end{gather}

\section{Estimate for $\|u\circ\varphi-u\circ\varphi_M\|_{L^2(\Omega\times\tspace)}$} \label{app: proof error feature}
\begin{enumerate}
\item The following proof is an extension of the one given in \cite{Welper2017} for $L^2$ functions. For some $\epsilon > 0$, define $\Omega_{\epsilon}:\{x \in \Omega \hsp :\hsp \opn{dist}(x,\pd\Omega) > \epsilon\}$. Let $u_\epsilon\in C^{\infty}(\Omega_\epsilon)$ be a mollification of $\sol{t}$ over $\Omega_\epsilon$. Then, the following holds
\begin{gather}
\|u_\epsilon-\sol{t} \|_{L^2(\Omega_{\epsilon})}\xrightarrow{\epsilon\to 0} 0,\hspB \|u_\epsilon(\cdot,t)\|_{BV(\Omega_\epsilon)}\leq \| \sol{t}\|_{BV(\Omega_\epsilon)}. \label{app: density}
\end{gather}
Triangle's inequality provides
\begin{equation*}
\begin{aligned}
\|u\circ \varphi-u\circ \varphi_M\|_{L^2(\Omega_\epsilon\times\tspace)} \leq &\|u\circ \varphi-u_\epsilon\circ \varphi\|_{L^2(\Omega\times\tspace)}\\
&+\|u\circ \varphi_M-u_\epsilon\circ \varphi_M\|_{L^2(\Omega_\epsilon\times\tspace)}\\
&+\|u_\epsilon\circ \varphi-u_\epsilon\circ \varphi_M\|_{L^2(\Omega_\epsilon\times\tspace)}. 
\end{aligned}
\end{equation*}
Applying a domain transformation and using \eqref{prop phi}, we find 
\begin{gather}
\|u\circ \varphi-u_\epsilon\circ \varphi\|_{L^2(\Omega\times\tspace)}\lesssim \epsilon,\hspB \|u\circ \varphi_M-u_\epsilon\circ \varphi_M\|_{L^2(\Omega\times\tspace)}\lesssim \epsilon. 
\end{gather}
Because of the above two relations, it is sufficient to bound $\|u_\epsilon\circ \varphi-u_\epsilon\circ \varphi_M\|_{L^2(\Omega_\epsilon\times\tspace)}$. For $s\in [0,1]$, define $\Phi(x,t,s) = s \varphi(x,t) +(1-s) \varphi_M(x,t)$. Using $\Phi$, we write 
\begin{align*}
\|u_\epsilon\circ \varphi-u_\epsilon\circ \varphi_M\|^2_{L^2(\Omega_\epsilon\times\tspace)} = &\int_{\Omega_\epsilon\times\tspace} \left(\int_0^1 \pd_s u_\epsilon (\Phi(x,t,s))ds\right)^2dxdt \\
\leq &\|u_\epsilon\|_{L^\infty(D),BV(\Omega_\epsilon))}\\
&\times \int_{\Omega_\epsilon\times\tspace} \left(\int_0^1 \pd_s |u_\epsilon (\Phi(x,t,s))|ds\right)dxdt \\
\leq &\|u\|_{L^\infty(\tspace;BV(\Omega))} \|u\|_{L^2(\tspace;BV(\Omega))}\\
&\times\|\varphi-\varphi_M\|_{L^\infty(\Omega\times\tspace)}.
\end{align*}
Above, the last inequality follows from \cite{Welper2017} and \eqref{app: density}.
\item

By definition, 
\begin{gather}
\varphi(z_i(0),t) = z_i(t),\hspB \varphi_M(\featureFV{i}(0),t) = \featureFV{i}(t).
\end{gather}
We refer to $z_i(0)$ and $\featureFV{i}(0)$ as the nodes and to $z_i(t)$ and $\featureFV{i}(t)$ as the node values of a spatial transform.
We introduce an intermediate (continuous and piecewise linear) spatial transform $\hat\varphi$ that has the same nodes as $\spTransf{t}{t}$ and the same nodal values as $\spTransfGrid{t}{t}$ i.e., $\hat\varphi(z_i(0),t) = \featureFV{i}(t)$.
By triangle's inequality,
\begin{gather}
\|\varphi_M-\varphi\|_{L^{\infty}(\Omega\times \tspace)}\leq \|\varphi-\hat\varphi\|_{L^{\infty}(\Omega\times\tspace)} + \|\hat\varphi-\varphi_M\|_{L^{\infty}(\Omega\times\tspace)}.
\end{gather}
Because $\varphi$ and $\hat\varphi$ have the same nodes, we conclude that 
\begin{gather}
\|\varphi-\hat\varphi\|_{L^{\infty}(\Omega\times\tspace)} = \max_{j}\|\featureFV{j}-z_j\|_{L^{\infty}(\tspace)}.
\end{gather}
It is easy to check that the maximum of $|\hat\varphi(\cdot,t)-\varphi_M(\cdot,t)|$ occurs at either the nodes $\{z_i(0)\}_i$ or $\{\featureFV{i}(0)\}_i$. Computing $|\hat\varphi(\cdot,t)-\varphi_M(\cdot,t)|$ at these nodes provides
\begin{equation}
\begin{aligned}
 \|\hat\varphi(\cdot,t)-\varphi_M(\cdot,t)\|_{L^{\infty}(\Omega)}\leq  &\|D_x\spTransfGrid{.}{t}\|_{L^{\infty}(\Omega)}\max_{j}|\featureFV{j}(t)-z_j(t)|\\
 \leq &\mcal K_1\max_{j}|\featureFV{j}(t)-z_j(t)|.
\end{aligned}
\end{equation}
where $\mcal K_1$ is the constant in \eqref{prop phi}. 
\end{enumerate}

\section{Relation to MRA}\label{app: relate MRA}
We briefly relate our feature detection method to that proposed in \cite{MRAdetect2014}. We specialise the formulation for a FV scheme, generalisations to arbitrary order discontinuous-Galerkin type schemes can be found in the references therein.
We divide $\Omega$ into uniform $N_l=2^l$ elements with $l\in\mbb N$. Such a choice of $N_l$ results in a hierarchy of grids parameterised by $l$. With $\mcal I^l_i$ we represent the $i$-th cell at level $l$. With $u^l_i(t)$ we denote the FV approximation of $\sol{t}$ in $\mcal I_i^l$. 

In the middle of every $\mcal I_i^{l-1}$ lies a face that is shared between $\mcal I^l_{2i-1}$ and $\mcal I^{l}_{2i}$. Let $J^{l-1}_i(t)$ denote the jump of the FV solution across this face i.e.,
\begin{gather}
    J_i^{l-1}(t) = |u_{2i-1}^l(t)-u_{2i}^l(t)|.\label{hierarchy d}
\end{gather}
Thus, given $u^l_i$, we can compute all of $J_i^{l-1}$. The coefficient $J_i^{l-1}/2$ is the same as the so-called wavelet coefficient in the MRA. Define
\begin{gather}
    D^{l-1}(t):=\max_{i\in 1,\dots,2^{l-1}}J_i^{l-1}(t).
\end{gather}
Similar to $\mcal B(t)$ in \eqref{set B}, define
\begin{gather}
    \mcal B^{l-1}(t) := \{i\hsp : \hsp |J_i^{l-1}(t)| > C\times D^{l-1}(t),\hsp i\in\{1,\dots 2^{l-1}\} \}.
\end{gather}
At level $l-1$, cells with index in $\mcal B^{l-1}$ are flagged. Due to the grid hierarchy, the cells at level $l$ that have a discontinuity are $\{2i-1\hsp :\hsp i\in\mcal B^{l-1} \}$ and $\{2i\hsp :\hsp i\in\mcal B^{l-1} \}$. Above, $C$ is the same as that defined in \eqref{set B}.

As is clear from the definition of $J_e^{l-1}$, in MRA one computes the jump in the FV solution at every alternate face.  Equivalently, MRA does not compute jumps at any face at level $l-1$. Therefore, a discontinuity (independent of its strength) aligned with any of these faces is not detected. 
Such discontinuities do not contribute to an oscillatory numerical solution.  Therefore,  for the purpose of flagging cells for suppressing oscillations, MRA is sufficient.  However, in the present context, missing out on large shocks is undesirable. Therefore, we compute the jumps at all the faces, which allows us to detect shocks that could be aligned with cell boundaries.

\section{Flagging of discontinuous regions} \label{app: flag regions}
For simplicity, we assume that $\solFV{t}$ is a projection of $\sol{t}$ onto the FV basis. At least computationally, for a small enough grid size, similar observation holds for a $\solFV{t}$ computed with a FV scheme. 

\begin{enumerate}
    \item Locally differentiable: If $\sol{t}|_{\mcal I_{e-1}\cup \mcal I_{e}}$ is $C^1$ then Taylor expansion provides
\begin{gather}
J_e \leq \Delta x \|\pd_x\sol{t}\|_{C^0(\mcal I_{e-1}\cup \mcal I_{e})}.
\end{gather}
\item Discontinuous: Let $\sol{t}$ have a discontinuity inside $\mcal I_{e}$. Let the point of discontinuity be $z^D = x_{e} + l\times\Delta x$ where $l\in (0,1)$. Furthermore, let $\sol{t}$ be piecewise constant in $\mcal I_{e-1} \cup \mcal I_{e}$ with the value before and after the discontinuity being $u_-$ and $u_+$, respectively. Then
\begin{gather}
J_e = |u_--u_+|(1-l). \label{shock wavelet}
\end{gather}
\item Kink: Assume that $\sol{t}$ is continuous, is piecewise linear in $\mcal I_{e-1}\cup \mcal I_{e}$ and has a kink at $z^K = x_{e} + l\times\Delta x$. Then, assuming $u(z^K,t) = 0$, $\sol{t}|_{\mcal I_{e-1}\cup \mcal I_{e}}$ reads
\begin{gather}
    \sol{t}|_{\mcal I_{e-1}\cup \mcal I_{e}} = \begin{cases}
    (x-x_K) \pd u_- &x < z_K\\
    (x-x_K)\pd u_+&x \geq z_K
    \end{cases}
\end{gather}
Above, $\pd u_-$ and $\pd u_+$ are the left and right slopes respectively. With the above $\sol{t}$, we find
\begin{gather*}
    J_e = \frac{\Delta x}{2}|(\pd u_--\pd u_+)l^2-2\times \pd u_+|.
\end{gather*}
\end{enumerate}

With the above relations and the form of $\mcal B(t)$ given in \eqref{set B}, we draw the following three conclusions. First, regions where the solution is $C^1$ but has a large gradient might be identified as discontinuities. Second, shocks with a strength (i.e., $|u_--u_+|$) of $\mcal O(\Delta x)$ might go undetected. Third, kinks with a large left and right derivative might be identified as discontinuities. In relation to the second point, in case $J_e(t) < C\Delta x$, where $C$ is as given in \eqref{set B}, one can show that the semi-discrete numerical solution already has the regularity necessary for a fast $m$-width decay. 

\end{document}